\newtheorem{theorem}{Theorem}[section]
\newtheorem{remark}[theorem]{Remark}
\newenvironment {proof} {{\it Proof.}}{\hspace*{\fill}$\Box$\par\vspace{4mm}}
\newcommand{\esssup}{ess\,sup\,}
\newcommand{\mc}{\mathcal}
\newcommand{\mb}{\mathbb}
\title{Forward backward stochastic differential equation games with delay and noisy memory}
\author{K. R. Dahl\footnotemark[2]}
\begin{document}

\maketitle

\renewcommand{\thefootnote}{\fnsymbol{footnote}}

\footnotetext[2]{Department of Mathematics, University of Oslo, Pb. 1053 Blindern, 0316 Oslo, Norway. kristrd@math.uio.no}

\medskip


\begin{abstract}
The goal of this paper is to study a stochastic game connected to a system of forward backward stochastic differential equations (FBSDEs) involving delay and noisy memory. We derive sufficient and necessary maximum principles for a set of controls for the players to be a Nash equilibrium in the game. Furthermore, we study a corresponding FBSDE involving Malliavin derivatives. This kind of equation has not been studied before. The maximum principles give conditions for determining the Nash equilibrium of the game. We use this to derive a closed form Nash equilibrium for an economic model where the players maximize their consumption with respect to recursive utility.

\medskip

\textbf{Key words:} Forward backward stochastic differential equations. \and Stochastic game. \and Delay. \and Noisy memory. \\[\smallskipamount]
\textbf{AMS subject classification:} 91A05. 91A15. 60H20. 60H10. 60J75. 34K50.
\end{abstract}

\section{Introduction}
\label{sec: introduction}

The aim of this paper is to study a stochastic game between two players. The game is based on a forward stochastic differential equation (SDE) for the process $X$. In applications to economy, this process can be thought of as the market situation, e.g. the financial market, the housing market or the oil market. This SDE includes two kinds of memory of the past; regular memory and noisy memory. Regular memory (also called delay, see f. ex. the survey paper by Ivanov et al.~\cite{IvanovEtAl}) means that the SDE can depend on previous values of the process $X$. That is, for some given $\delta > 0$, $X(t)$ depends on $X(t-\delta)$. For more on stochastic delay differential equations and optimal control with delay, see {\O}ksendal et al~\cite{Oksendal_delay_2011} and Agram and {\O}ksendal~\cite{AgramOksendal_delay}. In constrast, noisy memory means that the SDE may involve an It{\^o} integral over previous values of the process, so for $\delta > 0$, $X(t)$ depends on $\int_{t-\delta}^t X(s) dB(s)$ where $\{B(s)\}_{s \in [0,T]}$ is a Brownian motion. For more on noisy memory, see Dahl et al.~\cite{Dahl}.

Connected to this SDE are two backward stochastic differential equations (BSDEs). These BSDEs are connected to the SDE in the sense that they depend on $\{X(t)\}_{t \in [0,T]}$, as well as the delay and noisy memory of this process. Hence, this forms an FBSDE system. Each of these BSDEs corresponds to one of the players in the stochastic game; corresponding to player $i = 1, 2$ is a BSDE in the process $\{W_i(t)\}_{t \in [0,T]}$. The length of memory can be different for the two players, so for $i=1,2$, player $i$ has memory span $\delta_i$. The players may also have different levels of information, which is included in the model by having (potentially) different filtrations $\{\mc{E}^{(i)}_t\}_{t \in [0,T]}$, $i=1,2$.

Each of the players aim to find an optimal control $u_i$ which maximizes their personal performance (objective) function, $J_i$. Seminal work in stochastic optimal control has been done by Krylov and his students, see e.g. Krylov~\cite{Krylov} and \cite{Krylov2}. The performance function of each of the agents will be defined in such a way that it depends on the player's profit rate, the market process $X$ and the process $W_i$ coming from the player's BSDE (more on this in Section~\ref{sec: the_problem}, equation~\eqref{eq: performance}). This kind of problem, where both players maximize their performance which depends on an FBSDE, is called an FBSDE stochastic game, and has been studied by e.g. {\O}ksendal and Sulem~\cite{OS_artikkel}. However, they do not include memory in their model. We study conditions for a pair of controls $(u_1, u_2)$ to be a Nash equilibrium for such a stochastic game. That is, we would like to determine controls such that the players cannot benefit by changing their actions. In order to do so, we derive sufficient and necessary maximum principles giving conditions for a control to be Nash optimal. This is done in Sections~\ref{sec: sufficient} and \ref{sec: necessary}. Maximum principles for forward backward stochastic differential equations (FBSDEs) have been studied by Wang and Wu~\cite{Wu} as well as {\O}ksendal and Sulem~\cite{OS_artikkel}, but these papers do not consider a stochastic game.

In connection with these maximum principles, there are adjoint equations (see e.g. {\O}ksendal~\cite{Oksendal} for an introduction to sotchastic maximum principles and adjoint equations, or {\O}ksendal and Sulem~\cite{OksendalSulemDelay} for maximum principles and adjoint equations where delay is involved). In our case, these adjoint equations are a system of coupled forward backward stochastic differential equations involving Malliavin derivatives (see Di Nunno et al.~\cite{DiNunno} for more on Malliavin derivatives). To the best of our knowledge, such equations have not been studied before. In Section~\ref{sec: FBSDE} we study a slightly simplified version of these adjoint FBSDEs, and establish a connection between these equations and a  system of FBSDEs without Malliavin derivatives. Finally, in Section~\ref{sec: application}, we apply our results to a specific example in order to determine the optimal consumption with respect to recursive utility.



\section{The problem}
\label{sec: the_problem}

Let $(\Omega, \mc{F}, P)$ be a probability space, and let $B(t)$, $t \in [0, T]$ be a Brownian motion in this space. Also, let $\tilde{N}(t, \cdot)$ be an independent compensated Poisson random measure. Let $(\mc{F}_t)_{t \in [0,T]}$ be the $P$-augmented filtration generated by $B(t)$ and $\tilde{N}(t, \cdot)$.

We will consider a game between two players: player $1$ and player $2$. Let $u_i(t)$ be the control process chosen by player $i=1,2$, and denote $\bm{u}(t) = (u_1(t), u_2(t))$. Let $\mc{A}_i$, $i=1,2$, denote the set of admissible controls for player $i$ and $\mc{A} = \mc{A}_1 \times \mc{A}_2$.

We consider a controlled forward stochastic differential equation for a process $X(t)=X_u(t,\omega)$, $\omega \in \Omega, t \in [0,T]$ determining the market situation (in the following, we omit the $\omega$ for notational ease unless it is important to highlight its dependence):

\begin{equation}
\label{eq: FSDE}
\begin{array}{lll}
dX(t) &=& b(t, X(t), \bm{Y}(t), \bm{\Lambda}(t), \bm{u}(t), \omega) dt \\[\smallskipamount]
&& + \sigma(t, X(t), \bm{Y}(t), \bm{\Lambda}(t), \bm{u}(t), \omega) dB(t) \\[\smallskipamount]
&& + \int_{\mb{R}} \gamma(t^-, X(t^-), \bm{Y}(t^-), \bm{\Lambda}(t^-), \bm{u}(t^-), \zeta, \omega) \tilde{N}(dt, d \zeta) \\[\medskipamount]
X(0) &=& x
\end{array}
\end{equation}
\noindent where $\bm{Y}(t) = (Y_1(t), Y_2(t))$, $\bm{\Lambda}(t) = (\Lambda_1(t), \Lambda_2(t))$, and $Y_i(t) := X(t - \delta_i)$, $\Lambda_i(t) := \int_{t - \delta_i}^t X(s) dB(s)$, and $\delta_i \geq 0$ for $i=1,2$. The superscript $t^-$ means that we are taking the left limit of the process is question (that is, the value before a potential jump at time $t$), see {\O}ksendal and Sulem~\cite{OksendalSulemJump} for more on this.

Here, the \emph{delay processes} $Y_i$, and the \emph{noisy memory processes} $\Lambda_i$ correspond to player $i =1,2$ respectively. Hence, the two players may have memories for different time intervals, depending on the values of $\delta_i$.  Also,

\[
 \begin{array}{lll}
b : [0,T] \times \mb{R} \times \mb{R}^2 \times \mb{R}^2 \times \mc{A} \times \Omega \rightarrow \mb{R}, \\[\smallskipamount]
\sigma: [0,T] \times \mb{R} \times \mb{R}^2 \times \mb{R}^2 \times \mc{A} \times \Omega \rightarrow \mb{R}, \\[\smallskipamount]
\gamma : [0,T] \times \mb{R} \times \mb{R}^2 \times \mb{R}^2 \times \mc{A} \times \Omega \rightarrow \mb{R} \\[\smallskipamount]
 \end{array}
\]
\noindent are predictable functions such that for each $\bm{u} \in \mc{A}$ the SDE \eqref{eq: FSDE} has a unique solution.

\begin{remark}
Existence and uniqueness of solution for the SDE \eqref{eq: FSDE} is guaranteed under certain, fairly unrestrictive, assumptions on the coefficient functions, see Dahl et al.~\cite{Dahl}, Assumption 1, for conditions ensuring existence and uniqueness of solution to \eqref{eq: FSDE}. This can be seen by viewing equation \eqref{eq: FSDE} as a stochastic functional differential equation.
\end{remark}

In addition to this, the players (potentially) have different levels of information, represented by different subfiltrations $\mc{E}^{(i)}_t \subseteq \mc{F}_t$ for all $t \in [0, T]$, $i=1,2$.

For $i=1,2$, let $g_i(\cdot, x, y, \Lambda, w_i, z_i, k_i(\cdot), u, \omega)$ be a given predictable process, and let $h_i(x, \omega)$ be an $\mc{F}_T$-measurable function. Associated to the FSDE~\eqref{eq: FSDE}, we have a pair of backward stochastic differential equations (BSDEs) in the unknown stochastic processes $(W_i, Z_i, K_i)$, $i=1,2$:

\begin{equation}
\label{eq: BSDE}
\begin{array}{lll}
dW_i(t) &=& -g_i(t, X(t), \bm{Y}(t), \bm{\Lambda}(t), W_i(t), Z_i(t), K_i(t, \cdot), \bm{u}(t), \omega) dt \\[\smallskipamount]
&& + Z_i(t) dB(t) + \int_{\mb{R}} K_i(t, \zeta) \tilde{N}(dt, d\zeta) \\[\medskipamount]
W_i(T) &=& h_i(X(T), \omega).
\end{array}
\end{equation}

Note that these BSDEs are coupled to the SDE~\eqref{eq: FSDE} due to the dependency on $X$. Also, the BSDEs depend on the memory of the market process $X$, due to the dependency on the processes $\bm{Y}$ and $\bm{\Lambda}$. However, equation~\eqref{eq: BSDE} is a standard BSDE, hence the conditions for existence and uniqueness of solution is well known, see e.g. Pardoux and Peng \cite{PardouxPeng}.


For $i=1,2$, let $f_i: [0,T] \times \mb{R} \times \mb{R} \times \mb{R} \times \mc{A} \times \Omega \rightarrow \mb{R}, \varphi_i : \mb{R} \rightarrow \mb{R}, \psi_i : \mb{R} \rightarrow \mb{R}$ be functions representing a profit rate, bequest function and risk evaluation. Then, the performance function of each player $i=1,2$ is defined by:

\begin{equation}
\label{eq: performance}
J_i(u) = E[\int_0^T f_i(t, X^u(t), Y_i^u(t), \Lambda_i^u(t), u_i(t)) dt + \varphi_i(X^u(T)) + \psi_i(W_i^u(0))]
\end{equation}

\noindent where we must assume all conditions necessary for the integrals and the expectation to exist.

Also, note that the performance $J_i$ of player $i$ is a function of the control $\bm{u}(t) = (u_1(t), u_2(t))$, which is determined by both players. Therefore, this problem setting specifies a stochastic game.

A pair of controls $(\hat{u}_1, \hat{u}_2)$ is called a \emph{Nash equilibrium} for this stochastic game if the following holds:

\begin{equation}
\label{eq: Nash}
\begin{array}{lll}
J_1(u_1, \hat{u}_2) &\leq& J_1(\hat{u}_1, \hat{u}_2) \mbox{ for all } u_1 \in \mc{A}_1 \\[\smallskipamount]
J_2(\hat{u}_1, u_2) &\leq& J_2(\hat{u}_1, \hat{u}_2) \mbox{ for all } u_2 \in \mc{A}_2.
\end{array}
\end{equation}

In words, this means that in the Nash equilibrium, neither player would like to change their control.

Assume there exists a Nash equilibrium for this forward-backward stochastic differential (FBSDE) game with delay and noisy memory. We would like to find this Nash equilibrium, and we will do so by proving sufficient and necessary maximum principles for this problem. Therefore, we define a Hamiltonian function for each player $i =1,2$ as follows:

\begin{equation}
\label{eq: Hamiltonians}
\begin{array}{lll}
H_i(t,x, \bm{y}, \bm{\Lambda}, w_i, z_i, k_i, u_1, u_2, \lambda_i, p_i, q_i, r_i) = f_i(t,x, y_i, \Lambda_i, u_i) \\[\smallskipamount]
\quad \quad + \lambda_i g_i(t,x, \bm{y}, \bm{\Lambda}, w_i, z_i, k_i, u_1, u_2) + p_i b(t,x, \bm{y}, \bm{\Lambda}, u_1, u_2) \\[\smallskipamount]
\quad \quad + q_i \sigma(t,x, \bm{y}, \bm{\Lambda}, u_1, u_2)+ \int_{\mb{R}} r_i(\zeta) \gamma (t,x, \bm{y}, \bm{\Lambda}, u_1, u_2, \zeta) \nu(d \zeta).
\end{array}
\end{equation}

Assume $H_i$ is $C^1$ in $x,y_1,y_2, \Lambda_1, \Lambda_2, w_i, z_i, k_i, u_1, u_2$ for $i=1,2$. In the following, for ease of notation, we will use the abbreviation
\[
H_i(t)=H_i(t,x, \bm{y}, \bm{\Lambda}, w_i, z_i, k_i, u_1, u_2, \lambda_i, p_i, q_i, r_i)
\]

For $i=1,2$, we define a system of FBSDEs associated to these Hamiltonians in the unknown adjoint processes $(\lambda_i, p_i, q_i, r_i)$:

FSDE in $\lambda_i$ (which depends on $p_i, q_i, r_i$):
\begin{equation}
\label{eq: FSDE_adjoint}
\begin{array}{lll}
d \lambda_i(t) &=& \frac{\partial H_i}{\partial w_i}(t) dt + \frac{\partial H_i}{\partial z_i}(t) d B(t) + \int_{\mb{R}} \nabla_{k_i}(H_i(t, \zeta)) \tilde{N}(dt, d\zeta) \\[\smallskipamount]
\lambda_i(0) &=& \psi_{i}'(W_i(0)).
\end{array}
\end{equation}

\noindent where $\nabla_{k_i}(H_i(t, \zeta))$ is the Fr\'{e}chet derivative of $H_i$ at $k_i$, see the appendix in {\O}ksendal and Sulem~\cite{OS_artikkel} for a closer explanation of this gradient. 

We also define a BSDE in $p_i, q_i, r_i$, which depends on $\lambda_i$:

\begin{equation}
\label{eq: BSDE_adjoint}
\begin{array}{lll}
dp_i(t) &=& E[\mu_i(t) | \mc{F}_t]dt + q_i(t) dB(t) + \int_{\mb{R}} r_i(t, \zeta) \tilde{N}(dt, d\zeta) \\[\smallskipamount]
p_i(T) &=& \varphi_{i}'(X(T)) + h_{i}^{'}(X(T)) \lambda_i(T)
\end{array}
\end{equation}

\noindent where

\[
\mu_i(t) = -\frac{\partial H_i}{\partial x}(t) - \frac{\partial H_i}{\partial y_i}(t+\delta_i) \boldsymbol{1}_{[0, T-\delta_i]}(t) - \int_t^{t+\delta_i} D_t[\frac{\partial H_i}{\partial \Lambda_i}(s) \boldsymbol{1}_{[0,T]}(s) ds]
\]

\noindent and $D_t[\cdot]$ denotes the Malliavin derivative (see Remark~\ref{remark: Malliavin}). Note that the conditional expectation in \eqref{eq: BSDE_adjoint} is well defined by the extension of the Malliavin derivative introduced by Aase et al.~\cite{AaseEtAl}, see Remark~\ref{remark: Malliavin}. Equations~\eqref{eq: FSDE_adjoint}-\eqref{eq: BSDE_adjoint} form an FBSDE-system involving Malliavin derivatives. To the best of our knowledge, such systems have not been studied before.

\begin{remark}
\label{remark: Malliavin}
We refer to Nualart~\cite{Nualart}, Sanz-Sol\`{e}~\cite{SanzSole} and Di Nunno et al.~\cite{DiNunno} for information about the Malliavin derivative $D_t$ for Brownian motion $B(t)$ and, more generally, L\'{e}vy processes. In Aase et al.~\cite{AaseEtAl}, $D_t$ was extended from the space $\mb{D}_{1,2}$ to $L^2(P)$, where $\mb{D}_{1,2}$ denotes the classical space of Malliavin differentiable $\mc{F}_T$-measurable random variables. The extension is such that for all $F \in L^2(\mc{F}_T, P)$, the following holds:

\begin{enumerate}
\item[$(i)$] $D_t F \in (\mc{S})^*$, where $(\mc{S})^* \supseteq L^2(P)$ denotes the Hida space of stochastic distributions,

\item[$(ii)$] the map $(t,\omega) \rightarrow E[D_t F | \mc{F}_t]$ belongs to $L^2(\mc{F}_T, \lambda \times P)$, where $\lambda$ denotes the Lebesgue measure on $[0,T]$.

Moreover, the following \emph{generalized Clark-Ocone theorem} holds:

\item[$(iii)$]
\begin{equation}
F = E[F] + \int_0^T E[D_t F | \mc{F}_t] dB(t).\label{Clark-Ocone}
\end{equation}
See \cite{AaseEtAl}, Theorem 3.11, and also \cite{DiNunno}, Theorem 6.35.

Notice that  by combining It\^o's isometry with the Clark-Ocone theorem, we obtain
\begin{equation}\label{eq:normOfDtF-varF}
 E\Big[\int_0^T E[D_t F|\mathcal F_t]^2 dt\Big]= E\Big[\Big(\int_0^T E[D_t F|\mathcal F_t]dB(t)\Big)^2\Big]=E[(F^2-E[F]^2)]
\end{equation}

\item[$(iv)$]
As observed in Agram et al.~\cite{AgramOksendal}, we can also apply the Clark-Ocone theorem to show the following generalized duality formula:

Let $F \in L^2(\mc{F}_T, P)$ and let $\varphi(t) \in L^2(\lambda \times P)$ be adapted. Then

\begin{equation}
\label{eq: gen-duality}
E \Big[ F \int_0^T \varphi(t) dB(t) \Big] = E \Big[ \int_0^T E[D_t F | \mc{F}_t] \varphi(t) dt \Big]
\end{equation}

\end{enumerate}

\end{remark}

\begin{remark}

Note that equation~\eqref{eq: FSDE_adjoint} is linear in $\lambda_i$, and hence, if $p_i, q_i, r_i$ were given, it could be solved by using the It{\^o} formula. However, this solution will depend on the processes $X, Y_i, \Lambda_i$ and $W_i$, so in order to find an explicit solution for $\lambda_i$, we must also solve the coupled FBSDE system \eqref{eq: FSDE}-\eqref{eq: BSDE}. 

The BSDE~\ref{eq: BSDE_adjoint} is linear in $p_i$, and hence, if $\lambda_i$ was given, it would be possible to find a unique solution to this equation by using e.g. Proposition 6.2.1 in Pham~\cite{Pham} or Theorem 1.7 in {\O}ksendal and Sulem~\cite{OksendalSulem_riskmin}. However, as for the adjoint SDE~\eqref{eq: FSDE_adjoint}, this solution will depend on the coupled FBSDE system \eqref{eq: FSDE}-\eqref{eq: BSDE}.

\end{remark}


In the remaining part of the paper, we will prove a sufficient (Section~\ref{sec: sufficient}) and a necessary maximum principle (Section~\ref{sec: necessary}) for this kind of FBSDE game with delay and noisy memory. Then, we will study existence and uniqueness of solutions of the FBSDE system \eqref{eq: FSDE_adjoint}-\eqref{eq: BSDE_adjoint} (Section~\ref{sec: FBSDE}). Finally, we will present an example which illustrates our results: optimal consumption rate with respect to recursive utility (see Section~\ref{sec: application}).

\section{Sufficient maximum principle for FBSDE games with delay and noisy memory}
\label{sec: sufficient}



We prove a sufficient maximum principle which roughly states that under concavity conditions, a control $(\hat{u}_1, \hat{u}_2)$ satisfying a conditional maximum principle and an $\mc{L}^2$-condition is a Nash equilibrium for the stochastic game.

\begin{theorem}
 \label{thm: Suff-max-princ-FBSDE}

 Let $\hat{u}_1 \in \mc{A}_1$ and  $\hat{u}_2 \in \mc{A}_2$ with corresponding solutions $\hat{X}(t), \hat{Y}_i(t), \hat{\Lambda}_i(t)$, $\hat{W}_i(t), \hat{Z}_i(t),$ $\hat{K}_i(t), \hat{\lambda}_i(t),$ $\hat{p}_i(t), \hat{q}_i(t), \hat{r}_i(t,\zeta)$ of the FSDE~\eqref{eq: FSDE}, the BSDE~\eqref{eq: BSDE}, and the FBSDE system~\eqref{eq: FSDE_adjoint}-\eqref{eq: BSDE_adjoint} for $i=1,2$. Also, assume that:

 \begin{itemize}
  \item{(Concavity I) The functions $x \rightarrow h_i(x)$, $x \rightarrow \varphi_i(x), x \rightarrow \psi_i(x)$ are concave for $i=1,2$.}
  \item{(The conditional maximum principle)
  \[
   \begin{array}{lll}
    \esssup_{v \in \mc{A}_1} E[H_1(t,\hat{X}(t), \hat{\bm{Y}}(t), \hat{\bm{\Lambda}}(t), \hat{W}_1(t), \hat{Z}_1(t), \hat{K}_1(t,\cdot), \\[\smallskipamount]
    \hspace{1.5cm}v, \hat{u}_2(t), \hat{\lambda_1}(t), \hat{p}_1(t), \hat{q}_1(t), \hat{r}_1(t,\cdot)) | \mc{E}_t^{(1)}] \\[\smallskipamount]
    = E[H_1(t,\hat{X}(t), \hat{\bm{Y}}(t), \hat{\bm{\Lambda}}(t), \hat{W}_1(t), \hat{Z}_1(t), \hat{K}_1(t,\cdot), \\[\smallskipamount]
    \hspace{1.5cm} \hat{u}_1(t), \hat{u}_2(t), \hat{\lambda}_1(t), \hat{p}_1(t), \hat{q}_1(t), \hat{r}_1(t,\cdot)) | \mc{E}_t^{(1)}]
   \end{array}
  \]
  \noindent and similarly
  \[
   \begin{array}{lll}
    \esssup_{v \in \mc{A}_2} E[H_2(t,\hat{X}(t),\hat{\bm{Y}}(t), \hat{\bm{\Lambda}}(t), \hat{W}_2(t), \hat{Z}_2(t), \hat{K}_2(t,\cdot), \\[\smallskipamount]
    \hspace{1.5cm} \hat{u}_1, v, \hat{\lambda_2}(t), \hat{p}_2(t), \hat{q}_2(t), \hat{r}_2(t,\cdot)) | \mc{E}_t^{(2)}] \\[\smallskipamount]
     = E[H_2(t,\hat{X}(t),\hat{\bm{Y}}(t), \hat{\bm{\Lambda}}(t), \hat{W}_2(t), \hat{Z}_2(t), \hat{K}_2(t,\cdot), \\[\smallskipamount]
    \hspace{1.5cm} \hat{u}_1(t), \hat{u}_2(t), \hat{\lambda}_2(t), \hat{p}_2(t), \hat{q}_2(t), \hat{r}_2(t,\cdot)) | \mc{E}_t^{(2)}].
   \end{array}
  \]
}

\item{(Concavity II) The functions
\[
 \begin{array}{lll}
  \hat{\mc{H}}_1(t, x, y_1, \Lambda_1, w_1, z_1, k_1) \\[\smallskipamount]
:= \esssup_{v \in \mc{A}_1} E[H_1(t,x, y_1, \hat{y}_2,\Lambda_1, \hat{\Lambda}_2, w_1, z_1, k_1, v, \hat{u}_2, \hat{\lambda}_1, \hat{p}_1, \hat{q}_1, \hat{r}_1) | \mc{E}_t^{(1)}]
 \end{array}
\]
\noindent and
\[
 \begin{array}{lll}
  \hat{\mc{H}}_2(t, x, y_2, \Lambda_2, w_2, z_2, k_2) \\[\smallskipamount]
:= \esssup_{v \in \mc{A}_2} E[H_2(t,x, \hat{y}_1, y_2,\hat{\Lambda}_1, \Lambda_2, w_2, z_2, k_2, \hat{u}_1, v, \hat{\lambda}_2, \hat{p}_2, \hat{q}_2, \hat{r}_2) | \mc{E}_t^{(2)}]
 \end{array}
\]
\noindent are concave for all $t$ a.s.
}
\item{Finally, assume that the following $\mc{L}^2$ conditions hold:

\[
 \begin{array}{lll}
  E[\int_0^T \Big\{ \hat{p}_i^2(t) \Big[ \big(\sigma(t) - \hat{\sigma}(t)\big)^2 + \int_{\mb{R}} \big( r_i(t,\zeta) - \hat{r}_i(t,\zeta) \big)^2 \nu(d\zeta)    \Big] \\[\smallskipamount]
 \hspace{0.5cm} + \big(X(t) - \hat{X}(t)\big)^2 [\hat{q}_i^2(t) + \int_{\mb{R}} \hat{r}_i^2(t,\zeta) \nu(d\zeta)] \\[\smallskipamount]
 \hspace{0.5cm} + \big(Y_i(t) - \hat{Y}_i(t)\big)^2 [(\frac{\partial \hat{H}_i}{\partial z})^2(t) + \int_{\mb{R}} ||\nabla_k \hat{H}_i(t,\zeta) ||^2 \nu(d\zeta)] \\[\smallskipamount]
\hspace{0.5cm} + \hat{\lambda}_i^2(t) [ \big(\Lambda_i(t) - \hat{\Lambda}_i(t)\big)^2 + \int_{\mb{R}} (K_i(t,\zeta) - \hat{K}_i(t,\zeta))^2 \nu(d\zeta)] \Big\}] < \infty
 \end{array}
\]
\noindent for $i=1,2$.
}
\end{itemize}

 Then, $(\hat{u}_1, \hat{u}_2)$ is a Nash equilibrium.
\end{theorem}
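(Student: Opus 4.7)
The plan is to prove each Nash inequality separately; since they are symmetric I focus on showing $J_1(u_1,\hat u_2)\le J_1(\hat u_1,\hat u_2)$ for arbitrary $u_1\in\mc A_1$. Let $X,Y_1,\Lambda_1,W_1,Z_1,K_1$ denote the state processes driven by $(u_1,\hat u_2)$, and $\hat X,\ldots$ those driven by $(\hat u_1,\hat u_2)$. Decompose the performance difference as $J_1(u_1,\hat u_2)-J_1(\hat u_1,\hat u_2)=I_1+I_2+I_3$, where $I_1=E[\int_0^T(f_1-\hat f_1)\,dt]$, $I_2=E[\varphi_1(X(T))-\varphi_1(\hat X(T))]$ and $I_3=E[\psi_1(W_1(0))-\psi_1(\hat W_1(0))]$. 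Using concavity of $\varphi_1,\psi_1$ (Concavity~I) bound $I_2\le E[\varphi_1'(\hat X(T))(X(T)-\hat X(T))]$ and $I_3\le E[\psi_1'(\hat W_1(0))(W_1(0)-\hat W_1(0))]$, then substitute the adjoint boundary conditions $\varphi_1'(\hat X(T))=\hat p_1(T)-h_1'(\hat X(T))\hat\lambda_1(T)$ and $\psi_1'(\hat W_1(0))=\hat\lambda_1(0)$. Concavity of $h_1$ lets me absorb the residual $h_1'(\hat X(T))\hat\lambda_1(T)$ term against $\hat\lambda_1(T)(W_1(T)-\hat W_1(T))=\hat\lambda_1(T)(h_1(X(T))-h_1(\hat X(T)))$, which will appear naturally from the next step.

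Next, apply the It\^o--L\'evy formula to the products $t\mapsto \hat p_1(t)(X(t)-\hat X(t))$ and $t\mapsto \hat\lambda_1(t)(W_1(t)-\hat W_1(t))$ on $[0,T]$. Using $X(0)=\hat X(0)=x$, the dynamics of $\hat p_1$ (whose drift is $E[\mu_1\mid \mc F_t]$) and of $\hat\lambda_1$ (whose coefficients are the partial derivatives $\partial H_1/\partial w_1$, $\partial H_1/\partial z_1$ and $\nabla_{k_1}H_1$), the $\mc L^2$ conditions ensure the martingale parts have zero expectation, and the two identities turn $I_2+I_3$ into an integral in $dt$ of partial derivatives of $\hat H_1$ paired with $X-\hat X$, $Y_1-\hat Y_1$, $\Lambda_1-\hat\Lambda_1$, $W_1-\hat W_1$, $Z_1-\hat Z_1$, $K_1-\hat K_1$, plus the term $E[\int_0^T \hat\lambda_1(g_1-\hat g_1)\,dt]$. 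Combining with $I_1$ and using the definition \eqref{eq: Hamiltonians} of $H_1$, the integrand reassembles into
\[
H_1(t,X,\bm Y,\bm\Lambda,W_1,Z_1,K_1,u_1,\hat u_2,\hat\lambda_1,\hat p_1,\hat q_1,\hat r_1)-\hat H_1(t)-\tfrac{\partial\hat H_1}{\partial x}(X-\hat X)-\cdots
\]

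The main obstacle is that the linear correction in $\Lambda_1-\hat\Lambda_1$ and the delayed correction in $Y_1-\hat Y_1$ must be expressed as integrals against $X-\hat X$ so that they can be absorbed by the drift $\mu_1$ of the adjoint BSDE. For the delay piece, the change of variable $s=t+\delta_1$ gives $E[\int_0^T\frac{\partial H_1}{\partial y_1}(t)(Y_1(t)-\hat Y_1(t))\,dt]=E[\int_0^{T-\delta_1}\frac{\partial H_1}{\partial y_1}(t+\delta_1)(X(t)-\hat X(t))\,dt]$, matching the indicator term $\frac{\partial H_1}{\partial y_1}(t+\delta_1)\boldsymbol 1_{[0,T-\delta_1]}(t)$ in $\mu_1$. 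For the noisy memory piece, $\Lambda_1(t)-\hat\Lambda_1(t)=\int_{t-\delta_1}^t(X(s)-\hat X(s))\,dB(s)$, so applying the generalized duality formula \eqref{eq: gen-duality} with $F=\tfrac{\partial H_1}{\partial\Lambda_1}(t)$ and the adapted integrand $s\mapsto\boldsymbol 1_{[t-\delta_1,t]}(s)(X(s)-\hat X(s))$, then swapping the $s,t$ order by Fubini, produces $E[\int_0^T(X(s)-\hat X(s))\int_s^{s+\delta_1}D_s\tfrac{\partial H_1}{\partial\Lambda_1}(t)\boldsymbol 1_{[0,T]}(t)\,dt\,ds]$, which is precisely the Malliavin-derivative contribution to $\mu_1$. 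The $\mc L^2$ hypotheses are what legitimizes the use of \eqref{eq: gen-duality} and the interchanges of expectation and integration.

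Once all residual linear terms are accounted for against $\mu_1$, the bound reduces to
\[
J_1(u_1,\hat u_2)-J_1(\hat u_1,\hat u_2)\le E\Big[\int_0^T\big(H_1(t,\ldots,u_1,\hat u_2,\ldots)-\hat H_1(t)\big)dt\Big]-E\Big[\int_0^T\text{(linear correction)}\,dt\Big].
\]
Condition on $\mc E_t^{(1)}$ inside the integral: by definition of $\hat{\mc H}_1$ the conditional expectation of $H_1(t,X,Y_1,\ldots,u_1,\hat u_2,\ldots)$ is bounded above by $\hat{\mc H}_1(t,X,Y_1,\Lambda_1,W_1,Z_1,K_1)$, while the conditional maximum principle asserts $\hat{\mc H}_1(t,\hat X,\hat Y_1,\hat\Lambda_1,\hat W_1,\hat Z_1,\hat K_1)=E[\hat H_1(t)\mid\mc E_t^{(1)}]$ with $\hat u_1$ attaining the supremum. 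Concavity~II of $\hat{\mc H}_1$ in $(x,y_1,\Lambda_1,w_1,z_1,k_1)$ plus the envelope identification of $\nabla\hat{\mc H}_1$ at the maximizer with the corresponding partial derivatives of $\hat H_1$ yields $\hat{\mc H}_1(t,X,\ldots)-\hat{\mc H}_1(t,\hat X,\ldots)\le \tfrac{\partial\hat H_1}{\partial x}(X-\hat X)+\cdots$, exactly cancelling the linear correction and giving $J_1(u_1,\hat u_2)-J_1(\hat u_1,\hat u_2)\le 0$. Repeating the argument with the roles of $(u_1,\hat u_1,\mc E^{(1)},\hat{\mc H}_1)$ and $(u_2,\hat u_2,\mc E^{(2)},\hat{\mc H}_2)$ swapped proves the second Nash inequality.
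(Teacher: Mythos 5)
Your proposal is correct and follows essentially the same route as the paper's proof: the decomposition into $I_1+I_2+I_3$, the concavity bounds with the adjoint terminal/initial conditions, It\^o's product rule on $\hat p_1(X-\hat X)$ and $\hat\lambda_1(W_1-\hat W_1)$, the change of variables for the delay term, the generalized Malliavin duality formula for the noisy-memory term, and the final supergradient/envelope argument from Concavity~II combined with the conditional maximum principle. No substantive differences.
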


\begin{proof}
 We would like to show that $J_1(u_1, \hat{u}_2) \leq J_1(\hat{u}_1, \hat{u}_2)$ for all $u_1 \in \mc{A}_1$. Choose $u_1 \in \mc{A}_1$. By the definition of the performance function $J_1$,

 \[
  \delta := J_1(u_1, \hat{u}_2) - J_1(\hat{u}_1, \hat{u}_2) = I_1 + I_2 + I_3
 \]
\noindent where

\[
 I_1 = E[\int_0^T \{f_1(t, x, y, \Lambda, \bm{u}) - f_1(t, \hat{x}, \hat{y}, \hat{\Lambda}, \hat{\bm{u}})\}dt],
\]

\[
 I_2= E[\varphi_1(X(T)) - \varphi_1(\hat{X}(T))],
\]

\[
 I_3= E[\psi_1(W_1(0)) - \psi_1(\hat{W}_1(0))].
\]

Note that from the definition of the Hamiltonian,

\begin{equation}
\label{eq: I_1}
\begin{array}{llll}
 I_1 &=& E[\int_0^T \{ H_1(t) - \hat{H}_1(t) - \hat{\lambda}_1(t) (g_1(t) - \hat{g}_1(t)) - \hat{p}_1(t)(b(t) - \hat{b}(t)) \\[\smallskipamount]
 &&- \hat{q}_1(t)(\sigma(t) - \hat{\sigma}(t)) - \int_{\mb{R}} \hat{r}_1(t, \zeta) (\gamma(t, \zeta) - \hat{\gamma}(t, \zeta) \nu(d \zeta)) \}dt]
 \end{array}
\end{equation}
\noindent where we have used the abbreviation
\[
 \hat{H}_1(t) := H_1(t, \hat{X}(t), \hat{\bm{Y}}(t), \hat{\bm{\Lambda}}(t), \hat{W}_1(t), \hat{Z}_1(t), \hat{K}_1(t, \cdot), \hat{\bm{u}}, \hat{\lambda}_1, \hat{p}_1, \hat{q}_1, \hat{r}_1, \omega)
\]
\noindent and corresponding abbreviations for $H_1(t), b(t), \hat{b}(t), \sigma, \hat{\sigma}(t), \gamma(t)$ and $\hat{\gamma}(t)$.

Also,

\begin{equation}
 \label{eq: I_2}
 \begin{array}{llll}
  I_2 &=& E[\varphi_1(X(T)) - \varphi_1(\hat{X}(T))] \\[\smallskipamount]

  &\leq& E[\varphi_1'(\hat{X}(T))(X(T)- \hat{X}(T))] \\[\smallskipamount]

  &=& E[(\hat{p}_1(T) - h_1'(\hat{X}(T))\hat{\lambda}_1(T)) (X(T) - \hat{X}(T))] \\[\smallskipamount]

  &=& E[\hat{p}_1(T)(X(T) - \hat{X}(T))] - E[\hat{\lambda}_1(T)h_1'(\hat{X}(T))(X(T) - \hat{X}(T))] \\[\smallskipamount]

  &=& E[\int_0^T  \hat{p}_1(t) (dX(t) - d\hat{X}(t)) + \int_0^T (X(t) - \hat{X}(t))d\hat{p}_1(t) \\[\smallskipamount]

  &&+ \int_0^T \hat{q}_1(t)(\sigma(t) - \hat{\sigma}(t)) dt + \int_0^T \int_{\mb{R}} \hat{r}_1(t, \zeta) (\gamma(t, \zeta) - \hat{\gamma}(t, \zeta))  \nu(d \zeta) dt] \\[\smallskipamount]

  &&- E[\hat{\lambda}_1(T) h_1'(\hat{X}(T))(X(T) - \hat{X}(T))] \\[\smallskipamount]

  &=& E[\int_0^T \hat{p}_1(t)(b(t) - \hat{b}(t))dt + \int_0^T (X(t) - \hat{X}(t))(- \frac{\partial \hat{H}_1}{\partial x}(t) \\[\smallskipamount]

  &&- \frac{\partial \hat{H}_1}{\partial y_1}(t + \delta_1) \boldsymbol{1}_{[0,T- \delta_1]}(t) + \int_t^{t+\delta_1} D_t[- \frac{\partial \hat{H}_1}{\partial \Lambda_1}(s)] \boldsymbol{1}_{[0,T]}(s)ds  )dt \\[\smallskipamount]

  &&+ \int_0^T \hat{q}_1(t)(\sigma(t) - \hat{\sigma}(t)) dt + \int_0^T \int_{\mb{R}} \hat{r}_1(t, \zeta) (\gamma(t, \zeta) - \hat{\gamma}(t, \zeta))  \nu(d \zeta) dt]]\\[\smallskipamount]

  &&- E[\hat{\lambda}_1(T) h_1'(\hat{X}(T))(X(T) - \hat{X}(T))] \\[\smallskipamount]
 \end{array}
\end{equation}
\noindent where the first inequality follows from the concavity of $\varphi_1$, the second equality follows from equation~\eqref{eq: BSDE_adjoint}, the fourth equality from It{\^o}'s product rule applied to $\hat{p}_1 X$ and $\hat{p}_1 \hat{X}$, the fifth equality follows from equation~\eqref{eq: BSDE_adjoint}, the double expectation rule and equation~\eqref{eq: FSDE}.

Also, note that

\begin{equation}
 \label{eq: I_3}
 \begin{array}{llll}
  I_3 &=& E[\psi_1(W_1(0)) - \psi_1(\hat{W}_1(0))] \\[\smallskipamount]

  &\leq& E[\psi_1'(\hat{W}_1(0))(W_1(0) - \hat{W}_1(0))] \\[\smallskipamount]

  &=& E[\hat{\lambda}_1(T) (W_1(T) - \hat{W}_1(T))] - \{E[\int_0^T (W_1(t)- \hat{W}_1(t)) d \hat{\lambda}_1(t) \\[\smallskipamount]

  &&+ \int_0^T \hat{\lambda}_1(t) (dW_1(t) - d \hat{W}_1(t)) + \int_0^T \frac{\partial \hat{H}_1}{\partial z_1}(t) (Z_1(t) - \hat{Z}_1(t)) dt \\[\smallskipamount]

  &&+ \int_0^T \int_{\mb{R}} \nabla_{k_1} \hat{H}_1(t) (K_1(t) - \hat{K}_1(t)) \nu(d \zeta) dt]\} \\[\smallskipamount]

  &=& E[\hat{\lambda}_1(T)(h_1(X(T)) - h_1(\hat{X}(T)))] - \{ E[\int_0^T \frac{\partial \hat{H}_1}{\partial w_1}(t) (W_1(t) - \hat{W}_1(t)) dt \\[\smallskipamount]

  &&+ \int_0^T \hat{\lambda}_1(t) (-g_1(t) + \hat{g}_1(t)) dt + \int_0^T \frac{\partial \hat{H}_1}{\partial z_1}(t) (Z_1(t) - \hat{Z}_1(t)) dt \\[\smallskipamount]

  &&+ \int_0^T \int_{\mb{R}} \nabla_k \hat{H}_1(t) (K_1(t) - \hat{K}_1(t)) \nu(d \zeta) dt ] \} \\[\smallskipamount]

  &\leq& E[\hat{\lambda}_1(T) h_1'(\hat{X}(T)) (X(T) - \hat{X}(T))] -  \{ E[\int_0^T \frac{\partial \hat{H}_1}{\partial w_1}(t) (W_1(t) - \hat{W}_1(t)) dt \\[\smallskipamount]

  &&+ \int_0^T \hat{\lambda}_1(t) (-g_1(t) + \hat{g}_1(t)) dt + \int_0^T \frac{\partial \hat{H}_1}{\partial z_1}(t) (Z_1(t) - \hat{Z}_1(t)) dt \\[\smallskipamount]

  &&+ \int_0^T \int_{\mb{R}} \nabla_{k_1} \hat{H}_1(t) (K_1(t) - \hat{K}_1(t)) \nu(d \zeta) dt ] \} \\[\smallskipamount]

 \end{array}
\end{equation}

\noindent where the first inequality follows from the concavity of $\psi_1$, the second equality follows from equation~\eqref{eq: FSDE_adjoint}, the third equality follows from It{\^o}'s product rule applied to $\hat{\lambda}_1 Y_1$ and $\hat{\lambda}_1 \hat{Y}_1$, the fourth equality follows from  equation~\eqref{eq: BSDE} as well as equation~\eqref{eq: FSDE_adjoint}. The final inequality follows from the concavity of $h_1$ and that $\hat{\lambda}_1(T) \geq 0$.

Hence,

\begin{equation}
\label{eq: delta}
\begin{array}{lll}
 \Delta &=& I_1 + I_2 + I_3 \\[\smallskipamount]

 &\leq& E[\int_0^T \{H_1(t)- \hat{H}_1(t) - \Big( \frac{\partial \hat{H}_1}{\partial x}(t) + \frac{\partial \hat{H}_1}{\partial y_1}(t + \delta_1) \boldsymbol{1}_{[0,T - \delta_1]}(t) \\[\smallskipamount]

 &&+ \int_t^{t+\delta_1} D_t[\frac{\partial \hat{H}_1}{\partial \Lambda_1}(s)] \boldsymbol{1}_{[0,T]}(s) ds \Big) (X(t) - \hat{X}(t)) dt \} \\[\smallskipamount]

 &&- \int_0^T \big\{ \frac{\partial \hat{H}_1}{\partial w_1}(t)(W_1(t) - \hat{W}_1(t))  + \frac{\partial \hat{H}_1}{\partial z_1}(t)(Z_1(t) - \hat{Z}_1(t)) \\[\smallskipamount]

 &&+ \int_{\mb{R}} \nabla_{k_1} \hat{H}_1(t)(K_1(t, \zeta) - \hat{K}_1(t, \zeta)) \nu(d \zeta) \big\} dt ].
\end{array}
\end{equation}

Note that by changing the order of integration and using the duality formula for Malliavin derivatives (see Di Nunno et al.~\cite{DiNunno}), we get:

\begin{equation}
\label{eq: utregning_lambda}
\begin{array}{lll}
 E \Big[ \int_0^T \frac{\partial{\hat{H}_1}}{\partial{\Lambda_1}}(s) \big(\Lambda_1(s) - \hat{\Lambda}_1(s)\big)ds\Big] \\[\smallskipamount]

\hspace{2cm} = E\Big[\int_0^T \frac{\partial{\hat{H}_1}}{\partial{\Lambda_1}}(s)  \int_{s - \delta_1}^s \big(X(t) - \hat{X}(t)\big)dB(t) ds\Big] \\[\smallskipamount]

\hspace{2cm} = \int_0^T E\Big[\frac{\partial{\hat{H}_1}}{\partial{\Lambda_1}}(s)  \int_{s - \delta_1}^s \big(X(t) - \hat{X}(t)\big)dB(t)\Big] ds  \\[\smallskipamount]

\hspace{2cm} = \int_0^T E[\int_{s - \delta_1}^s E[ D_t(\frac{\partial{\hat{H}_1}}{\partial{\Lambda_1}}(s)) | \mc{F}_t] \big(X(t) - \hat{X}(t)\big) dt] ds \\[\smallskipamount]

\hspace{2cm} =  E[\int_0^T \int^{t + \delta_1}_t E[D_t(\frac{\partial{\hat{H}_1}}{\partial{\Lambda_1}}(s)) | \mc{F}_t] \boldsymbol{1}_{[0,T]}(s) ds (X(t) - \hat{X}(t)) dt\Big]\\[\smallskipamount]

\hspace{2cm} =  E[\int_0^T \int^{t + \delta_1}_t D_t(\frac{\partial{\hat{H}_1}}{\partial{\Lambda_1}}(s)) \boldsymbol{1}_{[0,T]}(s) ds (X(t) - \hat{X}(t)) dt\Big].
\end{array}
\end{equation}

Also, note that

\begin{equation}
 \label{eq: utregning_y}
\begin{array}{lllll}
 E \Big[\int_0^T \frac{\partial{\hat{H}}}{\partial{y_1}}(t) \big(Y_1(t) - \hat{Y_1}(t)\big)dt\Big] \\[\smallskipamount]

\hspace{2cm} = E \Big[\int_0^T \frac{\partial{\hat{H}}}{\partial{y_1}}(t) \big(X(t - \delta) - \hat{X}(t - \delta_1)\big)dt\Big] \\[\smallskipamount]

\hspace{2cm} = E \Big[\int_0^T \frac{\partial{\hat{H}}}{\partial{y_1}}(t + \delta_1) \boldsymbol{1}_{[0,T-\delta_1]}(t) \big(X(t) - \hat{X}(t)\big) dt \Big].
\end{array}
\end{equation}

Hence, by the inequality~\eqref{eq: delta} combined with equations~\eqref{eq: utregning_lambda} and \eqref{eq: utregning_y},

\begin{equation}
\label{eq: delta_2}
\begin{array}{lll}
  \Delta &\leq  E[\int_0^T \{H_1(t)- \hat{H}_1(t) - \frac{\partial \hat{H}_1}{\partial x}(t)(X(t) - \hat{X}(t)) - \frac{\partial \hat{H}_1}{\partial y_1}(t)(Y_1(t) - \hat{Y}_1(t)) \\[\smallskipamount]

 &- \frac{\partial \hat{H}_1}{\partial \Lambda_1}(t) (\Lambda_1(t) - \hat{\Lambda}_1(t)) dt - \frac{\partial \hat{H}_1}{\partial w_1}(t)(W_1(t) - \hat{W}_1(t))  - \frac{\partial \hat{H}_1}{\partial z_1}(t)(Z_1(t) - \hat{Z}_1(t)) \\[\smallskipamount]

 &+ \int_{\mb{R}} \nabla_{k_1} \hat{H}_1(t)(K_1(t, \zeta) - \hat{K}_1(t, \zeta)) \nu(d \zeta) \} dt ].
 \end{array}
\end{equation}

By assumption, $\hat{\mc{H}}_1$ is concave, so it is superdifferentiable\footnote{Defined similarly as subdifferentiability for convex functions.} (see Rockafellar~\cite{Rockafellar}) at the point $\vec{x}$ $:= (\hat{X}, \hat{Y}_1, \hat{\Lambda}_1, \hat{W}_1, \hat{Z}_1, \hat{K}_1)$. Thus, there exists a supergradient $\vec{a} := (a_0, a_1, a_2, a_3, a_4, a_5(\cdot))$ such that for all $\vec{y} := (x,y,\Lambda, w, z, k)$, the following holds:

\begin{equation}
\label{eq: superdiff}
\hat{\mc{H}}_1(\vec{x}) + \vec{a} \cdot (\vec{y} - \vec{x}) \geq \hat{\mc{H}}_1(\vec{y}).
\end{equation}

Define

\begin{equation}
 \label{eq: phi_1}
\begin{array}{lll}
 \phi_1(x, y, \Lambda,  w, z, k) := \hat{\mc{H}}_1(x,y, \Lambda, w, z, k) - \hat{\mc{H}}_1(\hat{X},\hat{Y}_1, \hat{\Lambda}_1, \hat{W}_1, \hat{Z}_1, \hat{K}_1) \\[\smallskipamount]

  \quad \quad - \{a_0 (x - \hat{X}) + a_1(y - \hat{Y}_1) + a_2(\Lambda - \Lambda_1) + a_3(w - \hat{W}_1) + a_4(z - \hat{Z}_1) \\[\smallskipamount]

  \quad \quad + \int_{\mb{R}} a_5(\zeta) (k - \hat{K}_1) \nu(d \zeta))\}.
\end{array}
 \end{equation}

Then, by equation~\eqref{eq: superdiff}

\begin{equation}
\label{eq: phi}
\begin{array}{lll}
\phi_1(x,y, \Lambda, w,z,k) &\leq& 0 \mbox{ for all } x,y, \Lambda, w,z,k, \\[\smallskipamount]
\phi_1(\hat{X}, \hat{Y}_1, \hat{\Lambda}_1, \hat{W}_1, \hat{Z}_1, \hat{K}_1) &=& 0 \mbox{ (by definition)}.
\end{array}
\end{equation}

Therefore, by differentiating equation~\eqref{eq: phi_1} and using equation~\eqref{eq: phi}, we find that

\[
\begin{array}{llllll}
a_0 &=& \frac{\partial \hat{\mc{H}}_1}{\partial x}(\hat{X}, \hat{Y}_1, \hat{\Lambda}_1, \hat{W}_1, \hat{Z}_1, \hat{K}_1) &=& \frac{\partial \hat{H}_1}{\partial x} \\[\smallskipamount]

a_1 &=& \frac{\partial \hat{\mc{H}}_1}{\partial y_1}(\hat{X}, \hat{Y}_1, \hat{\Lambda}_1, \hat{W}_1, \hat{Z}_1, \hat{K}_1) &=& \frac{\partial \hat{H}_1}{\partial y_1} \\[\smallskipamount]

a_2 &=& \frac{\partial \hat{\mc{H}}_1}{\partial \Lambda_1}(\hat{X}, \hat{Y}_1, \hat{\Lambda}_1, \hat{W}_1, \hat{Z}_1, \hat{K}_1) &=& \frac{\partial \hat{H}_1}{\partial \Lambda_1} \\[\smallskipamount]

a_3 &=& \frac{\partial \hat{\mc{H}}_1}{\partial w_1}(\hat{X}, \hat{Y}_1, \hat{\Lambda}_1, \hat{W}_1, \hat{Z}_1, \hat{K}_1) &=& \frac{\partial \hat{H}_1}{\partial w_1}

\\[\smallskipamount]

a_4 &=& \frac{\partial \hat{\mc{H}}_1}{\partial z_1}(\hat{X}, \hat{Y}_1, \hat{\Lambda}_1, \hat{W}_1, \hat{Z}_1, \hat{K}_1) &=& \frac{\partial \hat{H}_1}{\partial z_1}

\\[\smallskipamount]

a_5 &=& \nabla_{k_1} \hat{\mc{H}}_1(\hat{X}, \hat{Y}_1, \hat{\Lambda}_1, \hat{W}_1, \hat{Z}_1, \hat{K}_1) &=& \nabla_{k_1} \hat{H}_1.
\end{array}
\]

Therefore, it follows from this, equation~\eqref{eq: delta_2} and equation~\eqref{eq: phi} that

\[
 \Delta = \phi(X(t), Y_1(t), \Lambda_1(t), W_1(t), Z_1(t), K_1(t, \cdot)) \leq 0
\]
\noindent where the final inequality follows since $\hat{\mc{H}}_1$ is concave.

This means that $J_1(u_1, \hat{u}_2) \leq J_1(\hat{u}_1, \hat{u}_2)$ for all $u_1 \in \mc{A}_1$.

In a similar way, one can prove that $J_2(\hat{u}_1, u_2) \leq J_2(\hat{u}_1, \hat{u}_2)$ for all $u_2 \in \mc{A}_2$. This completes the proof that $(\hat{u}_1, \hat{u}_2)$ is a Nash-equilibrium.


\end{proof}

\section{Necessary maximum principle for FBSDE games with delay and noisy memory}
\label{sec: necessary}

In the following, we need some additional assumptions and notation:

\begin{itemize}
 \item{For all $t_0 \in [0,T]$ and all bounded $\mc{E}_i(t)$-measurable random variables $\alpha_i(\omega)$, the control

 \begin{equation}
 \label{assumption: A1}
  \beta_i(t) := \boldsymbol{1}_{(t_0,T)}(t) \alpha_i(\omega) \mbox{ is in } \mc{A}_i \mbox{ for } i=1,2.
 \end{equation}}

 \item{For all $u_i, \beta_i \in \mc{A}_i$ with $\beta_i$ bounded, there exists $\kappa_i > 0$ such that the control

 \begin{equation}
 \label{assumption: A2}
  u_i(t) + s \beta_i(t) \mbox{ for } t \in [0,T]
 \end{equation}
 \noindent belongs to $\mc{A}_i$ for all $s \in (-\kappa_i, \kappa_i)$, $i=1,2$.
}

\item{Also, assume that the following derivative processes exist and belong to $L^2([0,T] \times \Omega)$:

\begin{equation}
\label{assumption: A3}
 \begin{array}{llll}
  x_1(t) &=& \frac{d }{d s}X^{(u_1 + s\beta_1,u_2)}(t)|_{s=0}, \\[\smallskipamount]
  y_1(t) &=& \frac{d }{d s}Y_1^{(u_1 + s\beta_1,u_2)}(t)|_{s=0}, \\[\smallskipamount]
  \tilde{\Lambda}_1(t) &=& \frac{d }{d s}\Lambda_1^{(u_1 + s\beta_1,u_2)}(t)|_{s=0}, \\[\smallskipamount]
  w_1(t) &=& \frac{d }{d s}W_1^{(u_1 + s\beta_1,u_2)}(t)|_{s=0}, \\[\smallskipamount]
  z_1(t) &=& \frac{d }{d s}Z_1^{(u_1 + s\beta_1,u_2)}(t)|_{s=0}, \\[\smallskipamount]
  k_1(t) &=& \frac{d }{d s}K_1^{(u_1 + s\beta_1,u_2)}(t)|_{s=0}, \\[\smallskipamount]
 \end{array}
\end{equation}
\noindent and similarly for $x_2(t) = \frac{d }{d s}X^{(u_1 ,u_2+ s\beta_2)}(t)|_{s=0}$ etc. Notice that $x_i(0)=0$ for $i=1,2$ since $X(0)=x$.}
\end{itemize}

\bigskip

If these assumptions hold, we can prove a necessary maximum principle for our noisy memory FBSDE game. The proof of the following theorem is based on the same idea as the proof of Theorem 2.2 in {\O}ksendal and Sulem~\cite{OS_artikkel}, however the presence of noisy memory in our problem requires some extra care.

\bigskip

\begin{theorem}
\label{thm: necessary}
Suppose that $u \in \mc{A}$ with corresponding solutions $X(t), Y_i(t),$ $\Lambda_i(t), W_i(t),$ $Z_i(t), K_i(t,\zeta),$ $\lambda_i(t), p_i(t),$ $q_i(t), r_i(t,\zeta)$, $i=1,2$, of equations~\eqref{eq: FSDE}, \eqref{eq: BSDE}, \eqref{eq: FSDE_adjoint} and \eqref{eq: BSDE_adjoint}. Also, assume that conditions \eqref{assumption: A1}-\eqref{assumption: A3} hold. Then, the following are equivalent:

\begin{enumerate}
 \item[$(i)$] $\frac{\partial}{\partial s} J_1(u_1 + s \beta_1, u_2)|_{s=0} = \frac{\partial}{\partial s} J_2(u_1, u_2 + s \beta_2)|_{s=0} =0$ for all bounded $\beta_1 \in \mc{A}_1, \beta_2 \in \mc{A}_2$.
 \item[$(ii)$] $E[\frac{\partial  H_1(t, X(t), \bm{Y}(t),\bm{\Lambda}(t), W_1(t), Z_1(t), K_1(t,\cdot), v_1, u_2(t), \lambda_1(t), p_1(t), q_1(t), r_1(t, \cdot))}{\partial v_1}]|_{v_1=u_1(t)} \\[\smallskipamount]$
 $= E[\frac{\partial  H_2(t, X(t), \bm{Y}(t),\bm{\Lambda}(t), W_2(t), Z_2(t), K_2(t,\cdot), u_1(t), v_2, \lambda_2(t), p_2(t), q_2(t), r_2(t, \cdot))}{\partial v_2}]|_{v_2=u_2(t)} \\[\smallskipamount]$
 $=0$.
\end{enumerate}

\end{theorem}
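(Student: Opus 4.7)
The plan is to prove the equivalence only for player 1, since the argument for player 2 is symmetric. I would start by differentiating $J_1(u_1+s\beta_1,u_2)$ at $s=0$ under the integral/expectation, which yields
\[
\tfrac{d}{ds}J_1(u_1+s\beta_1,u_2)\big|_{s=0}
= E\Big[\int_0^T\!\Big(\tfrac{\partial f_1}{\partial x}x_1+\tfrac{\partial f_1}{\partial y_1}y_1+\tfrac{\partial f_1}{\partial \Lambda_1}\tilde\Lambda_1+\tfrac{\partial f_1}{\partial u_1}\beta_1\Big)dt+\varphi_1'(X(T))x_1(T)+\psi_1'(W_1(0))w_1(0)\Big],
\]
where the derivative processes are those in \eqref{assumption: A3}. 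The goal is to rewrite this expression entirely in terms of $\partial H_1/\partial u_1$ times $\beta_1$.

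Next I would apply It\^o's product rule to $\hat p_1(t)x_1(t)$ over $[0,T]$. Using the terminal condition of \eqref{eq: BSDE_adjoint}, the term $E[\varphi_1'(X(T))x_1(T)]$ becomes $E[p_1(T)x_1(T)]-E[\lambda_1(T)h_1'(X(T))x_1(T)]$, and expanding $dp_1$ and $dx_1$ produces integrals that combine the drift $\mu_1$, the diffusion coefficients $q_1,r_1$, and the linearized forward dynamics. The terms involving $\partial H_1/\partial x$, $\partial H_1/\partial y_1$, $\partial H_1/\partial \Lambda_1$, $b$, $\sigma$, $\gamma$ are thereby introduced. Analogously, applying It\^o's product rule to $\hat\lambda_1(t)w_1(t)$ and using \eqref{eq: FSDE_adjoint} together with the linearized BSDE for $w_1$ converts $E[\psi_1'(W_1(0))w_1(0)]$ and eliminates the terminal piece $E[\lambda_1(T)h_1'(X(T))x_1(T)]$ via the linearization of $W_1(T)=h_1(X(T))$. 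This mirrors exactly the manipulations in \eqref{eq: I_2} and \eqref{eq: I_3}, but carried out with equalities (no concavity) and using the linearizations in place of differences.

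The main obstacle, as in the sufficient case, is correcting for the delay and the noisy memory. I would handle the term $E[\int_0^T (\partial H_1/\partial y_1)(t)y_1(t)dt]$ by writing $y_1(t)=x_1(t-\delta_1)$ and performing the change of variables $t\mapsto t+\delta_1$, exactly as in \eqref{eq: utregning_y}; this produces $E[\int_0^T (\partial H_1/\partial y_1)(t+\delta_1)\mathbf{1}_{[0,T-\delta_1]}(t)x_1(t)dt]$, which is precisely one piece of $\mu_1$. For the noisy memory I would use that $\tilde\Lambda_1(s)=\int_{s-\delta_1}^{s}x_1(t)dB(t)$, then invoke the generalized duality formula \eqref{eq: gen-duality} and Fubini, exactly as in \eqref{eq: utregning_lambda}, to rewrite $E[\int_0^T (\partial H_1/\partial \Lambda_1)(s)\tilde\Lambda_1(s)ds]$ as $E[\int_0^T\!\int_t^{t+\delta_1}D_t[(\partial H_1/\partial\Lambda_1)(s)]\mathbf{1}_{[0,T]}(s)ds\, x_1(t)dt]$, which supplies the remaining piece of $\mu_1$. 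The delicate point is that $y_1$ and $\tilde\Lambda_1$ are not themselves in $\mathbb D_{1,2}$, so the conditional Malliavin extension of \cite{AaseEtAl} and the generalized Clark–Ocone formulation must be used (as was already invoked for defining \eqref{eq: BSDE_adjoint}), and one must justify Fubini and the duality interchange through the $L^2$ integrability of the derivative processes assumed in \eqref{assumption: A3}.

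After all cancellations only the control term survives, so
\[
\tfrac{d}{ds}J_1(u_1+s\beta_1,u_2)\big|_{s=0}=E\Big[\int_0^T \tfrac{\partial H_1}{\partial u_1}(t)\,\beta_1(t)\,dt\Big].
\]
To close the equivalence, assume (i). For arbitrary $t_0\in[0,T]$ and bounded $\mathcal E^{(1)}_{t_0}$-measurable $\alpha_1$, assumption \eqref{assumption: A1} lets me use $\beta_1(t)=\mathbf{1}_{(t_0,T)}(t)\alpha_1$; the resulting identity together with the tower property yields $E[\partial H_1/\partial v_1(t)|_{v_1=u_1(t)}\mid\mathcal E^{(1)}_t]=0$ for a.e.\ $t$, which gives (ii). Conversely, if (ii) holds, then for any bounded $\beta_1\in\mathcal A_1$ (adapted to $\mathcal E^{(1)}_t$) the expression above vanishes by conditioning on $\mathcal E^{(1)}_t$, giving (i). The admissibility of the perturbations along the one-parameter family is exactly assumption \eqref{assumption: A2}. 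The argument for player 2 is identical with roles swapped.
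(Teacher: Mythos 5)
Your proposal follows essentially the same route as the paper's own proof: differentiation of $J_1$ under the expectation, It\^o's product rule applied to $p_1x_1$ and to $\lambda_1 w_1$ with cancellation of the terminal term $E[\lambda_1(T)h_1'(X(T))x_1(T)]$, the change of variables $t\mapsto t+\delta_1$ for the delay term, and the generalized duality formula for the noisy-memory term, all collapsing to $D_1=E[\int_0^T \beta_1(t)\,\partial H_1/\partial u_1(t)\,dt]$. Your closing localization argument with $\beta_1=\mathbf{1}_{(t_0,T)}\alpha_1$ is in fact spelled out more explicitly than in the paper, which stops at the formula for $D_1$.
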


\begin{proof}
 We only prove that $\frac{\partial}{\partial s} J_1(u_1 + s \beta_1, u_2)|_{s=0} =0$ for all bounded $\beta_1 \in \mc{A}_1$ is equivalent to

$\\[\smallskipamount]
E[\frac{\partial  H_1(t, X(t), \bm{Y}(t),\bm{\Lambda}(t), W_1(t), Z_1(t), K_1(t,\cdot), v_1, u_2(t), \lambda_1(t), p_1(t), q_1(t), r_1(t, \cdot))}{\partial v_1}]|_{v_1=u_1(t)} =0. \\[\smallskipamount]$

The remaining part of the theorem (i.e., the same statement for $J_2$ and $H_2$) is proved in a similar way.

Note that, by the definition of $J_1$ and by interchanging differentiation and integration,

\[
\begin{array}{lll}
D_1 &:=& \frac{\partial}{\partial s} J_1(u_1 + s \beta_1, u_2)|_{s=0} \\[\smallskipamount]
&=& E[\int_0^T \{\frac{\partial f_1}{\partial x}(t) x_1(t) + \frac{\partial f_1}{\partial y}(t) y_1(t) + \frac{\partial f_1}{\partial \Lambda}(t) \tilde{\Lambda}_1(t) \frac{\partial f_1}{\partial u_1}(t) \beta_1(t) \}dt \\[\smallskipamount]
&&+ \varphi_1'(X(T))x_1(T) + \phi_1'(W_1(0))w_1(0)].
\end{array}
\]

We study the different parts of $D_1$ separately. First, by the It{\^o} product rule, the adjoint BSDE~\eqref{eq: BSDE_adjoint} and the definition of $x_1(t)$,

\begin{equation}
\label{eq: necessary_mellom_1}
 \begin{array}{rlll}
  I_1 :=& E[\varphi_1'(X(T))x_1(T)] \\[\smallskipamount]
  =& E[p_1(T)x_1(T)] - E[h_1'(X(T)) \lambda_1(T) x_1(T)] \\[\smallskipamount]
  =& E[p_1(0) x_1(0)] + E[\int_0^T p_1(t) dx_1(t) + \int_0^T x_1(t) dp_1(t) \\[\smallskipamount]
  &+ \int_0^T d[p_1,x_1](t)] - E[h_1'(X(T)) \lambda_1(T) x_1(T)] \\[\smallskipamount]
  =& E[ \int_0^T p_1(t) \big( \frac{\partial b}{\partial x}(t)x_1(t) + \frac{\partial b}{\partial y_1}(t) y_1(t) + \frac{\partial b}{\partial \Lambda_1}(t)\tilde{\Lambda}_1(t) + \frac{\partial b}{\partial u_1}(t) \beta_1(t)   \big)dt] \\[\smallskipamount]
  & + E[\int_0^T x_1(t) E[\mu_1(t) | \mc{F}_t]dt]\\[\smallskipamount]
  &+ E[\int_0^T q_1(t) \big( \frac{\partial \sigma}{\partial x}(t)x_1(t) + \frac{\partial \sigma}{\partial y_1}(t)y_1(t) + \frac{\partial \sigma}{\partial \Lambda_1}\tilde{\Lambda}_1(t) + \frac{\partial \sigma}{\partial u_1}(t) \beta_1(t)  \big) dt] \\[\smallskipamount]
  &+ E[\int_0^T \int_{\mb{R}} r_1(t, \zeta) \big( \frac{\partial \gamma}{\partial x}(t)x_1(t) + \frac{\partial \gamma}{\partial y_1}(t)y_1(t) + \frac{\partial \gamma}{\partial \Lambda_1}\tilde{\Lambda}_1(t) + \frac{\partial \gamma}{\partial u_1}(t) \beta_1(t)  \big) d\nu(\zeta) dt] \\[\smallskipamount]
  &- E[h_1'(X(T)) \lambda_1(T) x_1(T)].
 \end{array}
\end{equation}

Also, by the FSDE~\eqref{eq: FSDE_adjoint}, the BSDE~\eqref{eq: BSDE}, the definition of $x_1(t)$ and the It{\^o} product rule,

\begin{equation}
\label{eq: necessary_mellom_2}
 \begin{array}{rllll}
  I_2 :=& E[\phi_1'(W_1(0)) w_1(0)] \\[\smallskipamount]
  =& E[\lambda_1(0)w_1(0)] \\[\smallskipamount]
  =& E[\lambda_1(T)w_1(T)] - E[\int_0^T \lambda_1(t) dw_1(t) + \int_0^T w_1(t) d\lambda_1(t) \\[\smallskipamount]
  &+ \int_0^T z_1(t) \frac{\partial H_1}{\partial z_1}(t)dt + \int_0^T \int_{\mb{R}} \nabla_{k_1} H_1(t, \zeta) k_1(t,\zeta) \nu(d \zeta) dt] \\[\smallskipamount]
  =& E[\lambda_1(T)h_1'(X(T))x_1(T)] + E[\int_0^T \lambda_1(t) \big( \frac{\partial g_1}{\partial x}(t)x_1(t) + \frac{\partial g_1}{\partial y_1}(t)y_1(t) \\[\smallskipamount]
  &+ \frac{\partial g_1}{\partial \Lambda_1}(t)\tilde{\Lambda}(t) + \frac{\partial g_1}{\partial w_1}(t)w_1(t) + \frac{\partial g_1}{\partial z_1}(t)z_1(t) + \nabla_{k_1} g_1(t)k_1(t) \\[\smallskipamount]
  & + \frac{\partial g_1}{\partial u_1}(t) \beta_1(t) \big)dt] - E[\int_0^T \frac{\partial H_1}{\partial w_1}(t)w_1(t) dt] \\[\smallskipamount]
  & -E[\int_0^T z_1(t) \frac{\partial H_1}{\partial z_1}(t)dt + \int_0^T \int_{\mb{R}} \nabla_k H_1(t, \zeta) k_1(t,\zeta) \nu(d \zeta) dt].
 \end{array}
\end{equation}

By the definition of $D_1$ as well as equations~\eqref{eq: necessary_mellom_1} and \eqref{eq: necessary_mellom_2},

\begin{equation}
\label{eq: D_1}
\begin{array}{llll}
 D_1 &=& A + E[\int_0^T \beta_1(t) \big( \frac{\partial f_1}{\partial u_1}(t) + \frac{\partial b}{\partial u_1}(t)p_1(t) + \frac{\partial \sigma}{\partial u_1}(t) q_1(t) + \frac{\partial \gamma}{\partial u_1}(t)r_1(t) \\[\smallskipamount]
 &&+ \frac{\partial g_1}{\partial u_1}(t)\lambda_1(t) \big)dt] + E[\int_0^T w_1(t) \{ - \frac{\partial H_1}{\partial w_1}(t) + \frac{\partial g_1}{\partial w_1}(t) \lambda_1(t)\}dt \\[\smallskipamount]
 &&+ \int_0^T z_1(t) \{- \frac{\partial H_1}{\partial z_1}(t) + \frac{\partial g_1}{\partial x}(t) \lambda_1(t) \}dt \\[\smallskipamount]
&&+ \int_0^T k_1(t) \{-\nabla_{k_1} H_1(t) + \nabla_k g_1(t) \lambda_1(t) \}dt]
\end{array}
\end{equation}

\noindent where

\begin{equation}
\label{eq: A}
\begin{array}{lll}
 A &:=& E[\int_0^T x_1(t) \{\frac{\partial f_1}{\partial x}(t) + \frac{\partial b}{\partial x}(t)p_1(t) + E[\mu_1(t) | \mc{F}_t] + \frac{\partial \sigma}{\partial x}(t) q_1(t) \\[\smallskipamount]
 &&+\frac{\partial \gamma}{\partial x}(t) r_1(t) + \frac{\partial g_1}{\partial x}(t) \lambda_1(t)  \}dt + \int_0^T y_1(t) \{\frac{\partial f_1}{\partial y_1}(t) + \frac{\partial b}{\partial y_1}(t)p_1(t)  \\[\smallskipamount]
 &&+ \frac{\partial \sigma}{\partial y_1}(t) q_1(t) +\frac{\partial \gamma}{\partial y_1}(t) r_1(t) + \frac{\partial g_1}{\partial y_1}(t) \lambda_1(t)  \}dt + \int_0^T \tilde{\Lambda}_1(t) \{\frac{\partial f_1}{\partial \Lambda_1}(t) \\[\smallskipamount]
 &&+ \frac{\partial b}{\partial \Lambda_1}(t)p_1(t) +  \frac{\partial \sigma}{\partial \Lambda_1}(t) q_1(t) +\frac{\partial \gamma}{\partial \Lambda_1}(t) r_1(t) + \frac{\partial g_1}{\partial \Lambda_1}(t) \lambda_1(t)  \}dt] \\[\smallskipamount]
 &=& E[\int_0^T x_1(t) \{\frac{\partial H_1}{\partial x}(t) + E[\mu_1(t) | \mc{F}_t] \} dt] + E[\int_0^T y_1(t) \frac{\partial H_1}{\partial y_1}(t)] \\[\smallskipamount]
 &&+ E[\int_0^T \tilde{\Lambda}_1(t) \frac{\partial H_1}{\partial \Lambda_1}(t)].
 \end{array}
\end{equation}

Then, by using the definition of the Hamiltonian $H_1$, see equation~\eqref{eq: Hamiltonians}, we see that everything inside the curly brackets in equation~\eqref{eq: D_1} is equal to zero. Hence,

\[
 D_1 = A + E[\int_0^T \beta_1(t) \frac{\partial H_1}{\partial u_1}(t)dt].
\]

Recall that from the definitions of $y_1$ and $\tilde{\Lambda}_1$,

\[
 y_1(t) = x_1(t-\delta_1) \mbox{ and } \tilde{\Lambda}_1(t) = \int_{t-\delta_1}^t x_1(u) dB(u).
\]

This implies, by change of variables

\[
 \begin{array}{lll}
  E[\int_0^T y_1(t) \frac{\partial H_1}{\partial y_1}(t)] &=& E[\int_0^T x_1(t - \delta_1) \frac{\partial H_1}{\partial y_1}(t) dt] \\[\smallskipamount]
  &=& \int_{-\delta_1}^{T-\delta_1} x_1(u) \frac{\partial H_1}{\partial y_1}(u + \delta_1)du] \\[\smallskipamount]
  &=& E[\int_0^T x_1(u) \boldsymbol{1}_{[0,T-\delta_1]}(u) \frac{\partial H_1}{\partial y_1}(u + \delta_1) du].
 \end{array}
\]

Also, by the duality formula for Malliavin derivatives (see Di Nunno et al.~\cite{DiNunno}) and changing the order of integration

\[
 \begin{array}{lll}
  E[\int_0^T \tilde{\Lambda}_1(t) \frac{\partial H_1}{\partial \Lambda_1}(t)] &=& E[\int_0^T \int_{t-\delta_1}^{t} x_1(u) dB(u) \frac{\partial H_1}{\partial \Lambda_1}(t) dt] \\[\smallskipamount]
  &=& E[\int_0^T \int_{t-\delta_1}^t E[D_u(\frac{\partial H_1}{\partial \Lambda_1}(t)) | \mc{F}_u] x_1(u) du \mbox{ } dt] \\[\smallskipamount]
  &=& E[\int_0^T \int_u^{u + \delta_1} E[D_u(\frac{\partial H_1}{\partial \Lambda_1}(t)) | \mc{F}_u] \boldsymbol{1}_{[0,T]}(t) dt \mbox{ } x_1(u) \mbox{ } du ].
 \end{array}
\]

But, from the definition of $\mu_1$,

\[
 \begin{array}{lll}
  E[\int_0^T x_1(t) E[\mu_1(t) | \mc{F}_t] dt ] &=& E[\int_0^T E[x_1(t) \mu_1(t) | \mc{F}_t] \mbox{ } dt] \\[\smallskipamount]
  &=& E[ \int_0^T E[x_1(t) \{ -\frac{\partial H_1}{\partial x}(t) - \frac{\partial H_1}{\partial y_1}(t + \delta_1)\boldsymbol{1}_{[0,T-\delta_1]} \\[\smallskipamount]
  &&- \int_t^{t+\delta_1} D_t[\frac{\partial H_1}{\partial \Lambda_1}(s)] \boldsymbol{1}_{[0,T]}(s)ds \} | \mc{F}_t] dt].
 \end{array}
\]

So, by the rule of double expectation and the calculations above, $A=0$. This implies that $D_1= E[\int_0^T \beta_1(t) \frac{\partial H_1}{\partial u_1}(t)dt]$, so

\[
  \frac{\partial}{\partial s} J_1(u_1 + s \beta_1, u_2)|_{s=0} = E[\int_0^T \beta_1(t) \frac{\partial H_1}{\partial u_1}(t)dt]
\]

\noindent which was what we wanted to prove.

\end{proof}

\section{Solution of the noisy memory FBSDE}
\label{sec: FBSDE}

In this section, we consider a slightly simplified version of the system of noisy memory FBSDEs in equations~\eqref{eq: FSDE_adjoint} and \eqref{eq: BSDE_adjoint}. Instead, consider the following \emph{noisy memory FBSDE}:

FSDE in $\lambda$,
\begin{equation}
 \label{eq: noisyFBSDE}
 \begin{array}{llll}
 d\lambda(t) &=& \frac{\partial H}{\partial w}(t)dt + \frac{\partial H}{\partial z}(t)dB(t) + \int_{\mb{R}} \nabla_k H(t, \zeta) \tilde{N}(dt,d \zeta) \\[\smallskipamount]
 \lambda(0) &=& \phi'(W(0)).
 \end{array}
\end{equation}

BSDE in $p, q$ and $r$,
\begin{equation}
 \label{eq: noisyBSDE}
 \begin{array}{llll}
  dp(t) &=& -E[\mu(t) | \mc{F}_t]dt + q(t)dB(t) + \int_{\mb{R}} r(t,\zeta)\tilde{N}(dt, d\zeta) \\[\smallskipamount]
  p(T) &=& \varphi'(X(T)) + h'(X(T))\lambda(T)
 \end{array}
\end{equation}

\noindent where

\[
\begin{array}{lll}
 H(t,x, y_1, y_2, \Lambda_1, \Lambda_2, w, z, k, u_1, u_2, \lambda, p, q, r) \\[\smallskipamount]
\quad \quad \quad = f(t,x, y, \Lambda, u_1, u_2) + \lambda g(t,x, y_1, y_2, \Lambda_1, \Lambda_2, w, z, k, u_1, u_2) \\[\smallskipamount]
\quad \quad \quad + p b(t,x, y_1, y_2, \Lambda_1, \Lambda_2, u_1, u_2) + q \sigma(t,x, y_1, y_2, \Lambda_1, \Lambda_2, u_1, u_2) \\[\smallskipamount]
\quad \quad \quad + \int_{\mb{R}} r(\zeta) \gamma (t,x, y_1, y_2, \Lambda_1, \Lambda_2, u_1, u_2, \zeta) \nu(d \zeta)
\end{array}
\]

and

\[
\mu(t) = \frac{\partial H}{\partial x}(t) + \frac{\partial H}{\partial y}(t + \delta) \boldsymbol{1}_{[0,T-\delta]}(t) + \int_t^{t+\delta} E[D_t[\frac{\partial H}{\partial \Lambda}(s)] | \mc{F}_t] \boldsymbol{1}_{[0,T]}(s)ds.
\]

Note that the set of equations~\eqref{eq: FSDE_adjoint} and \eqref{eq: BSDE_adjoint} are two such systems such as \eqref{eq: noisyFBSDE}-\eqref{eq: noisyBSDE} involving the same $X$ process as well as the same controls $u_1, u_2$.

Also, consider the following system consisting of an FSDE and two BSDEs:

FSDE in $\lambda$,
\begin{equation}
 \label{eq: noisyFBSDE2}
 \begin{array}{llll}
 d\tilde{\lambda}(t) &=& \frac{\partial \mc{H}}{\partial w}(t)dt + \frac{\partial \mc{H}}{\partial z}(t)dB(t) + \int_{\mb{R}} \nabla_k \mc{H}(t, \zeta) \tilde{N}(dt,d \zeta) \\[\smallskipamount]
 \tilde{\lambda}(0) &=& \phi'(W(0)).
 \end{array}
\end{equation}

BSDE in $p_1, q_1$ and $r_1$,
\begin{equation}
 \label{eq: noisyBSDE1}
 \begin{array}{llll}
  dp_1(t) &=& -E[\mu_1(t) | \mc{F}_t]dt + q_1(t)dB(t) + \int_{\mb{R}} r_1(t,\zeta)\tilde{N}(dt, d\zeta) \\[\smallskipamount]
  p_1(T) &=& \varphi'(X(T)) + h'(X(T))\tilde{\lambda}(T).
 \end{array}
\end{equation}

BSDE in $p_2, q_2$ and $r_2$,
\begin{equation}
 \label{eq: noisyBSDE2}
 \begin{array}{llll}
  dp_2(t) &=& -E[\mu_2(t) | \mc{F}_t]dt + q_2(t)dB(t) + \int_{\mb{R}} r_2(t,\zeta)\tilde{N}(dt, d\zeta) \\[\smallskipamount]
  p_2(T) &=& 0
 \end{array}
\end{equation}

\noindent where

\begin{equation}
\label{eq: cal_H}   
\begin{array}{lll}
 \mc{H}(t,x, y_1, y_2, \Lambda_1, \Lambda_2, w, z, k, u_1, u_2, \tilde{\lambda}, p_1, p_2, q_1, q_2, r_1, r_2) \\
\hspace{2cm} = q_2(t)x + H(t,x, y_1, y_2, \Lambda_1, \Lambda_2, w, z, k, u_1, u_2, \tilde{\lambda}, p_1, q_1, r_1),
\end{array}
\end{equation}

\[
\mu_1(t) = q_2(t) +\frac{\partial H}{\partial x}(t) + \frac{\partial H}{\partial y}(t + \delta) \boldsymbol{1}_{[0,T-\delta]}(t)
\]

\noindent and

\[
\mu_2(t) = \frac{\partial H}{\partial \Lambda}(t) - \frac{\partial H}{\partial \Lambda}(t + \delta) \boldsymbol{1}_{[0,T-\delta]}(t).
\]

\noindent Note that $\frac{\partial \mc{H}}{\partial \Lambda}(t) = \frac{\partial H}{\partial \Lambda}(t)$, $\frac{\partial \mc{H}}{\partial \Lambda}(t) = q_2(t) + \frac{\partial H}{\partial \Lambda}(t)$ and $\frac{\partial \mc{H}}{\partial y}(t) = \frac{\partial H}{\partial y}(t)$. Hence, equations~\eqref{eq: noisyFBSDE} and \eqref{eq: noisyFBSDE2} are structurally equal.

Then, by similar techniques as in Dahl et al.~\cite{Dahl}, we can show the following theorem:

\begin{theorem}
 \label{thm: noisyFBSDE_solution}
 Assume that $(p_i, q_i, r_i)$ for $i=1,2$ and $\tilde{\lambda}$ solve the FBSDE system \eqref{eq: noisyFBSDE2}-\eqref{eq: noisyBSDE2}. Define $\lambda = \tilde{\lambda}$, $p(t) = p_1(t)$, $q(t)=q_1(t)$ and $r(t, \cdot) =r_1(t, \cdot)$ and assume that $E[\int_0^T (\frac{\partial H(t)}{\partial z})^2]dt < \infty$. Then, $(p,q,r,\lambda)$ solves the noisy memory FBSDE \eqref{eq: noisyFBSDE}-\eqref{eq: noisyBSDE} and

 \[
  q_2(t) = \int_t^{t+\delta} E[D_t[\frac{\partial H}{\partial \Lambda}(s)] | \mc{F}_t]ds.
 \]

\end{theorem}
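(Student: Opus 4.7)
The plan is first to notice that equations \eqref{eq: noisyFBSDE} and \eqref{eq: noisyFBSDE2} for $\lambda$ and $\tilde\lambda$ are literally the same equation. Indeed, by \eqref{eq: cal_H}, $\mc H(t,\dots)-H(t,\dots)=q_2(t)x$ is independent of $w$, $z$ and $k$, so $\frac{\partial\mc H}{\partial w}=\frac{\partial H}{\partial w}$, $\frac{\partial\mc H}{\partial z}=\frac{\partial H}{\partial z}$ and $\nabla_k\mc H=\nabla_k H$; since the initial conditions also agree, the choice $\lambda:=\tilde\lambda$ solves \eqref{eq: noisyFBSDE} for free. The nontrivial content is therefore (i) to establish the announced representation of $q_2$, and (ii) to show that with this representation the drivers of \eqref{eq: noisyBSDE1} and \eqref{eq: noisyBSDE} coincide.

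For (i), I would first solve the linear BSDE \eqref{eq: noisyBSDE2} explicitly. Since $p_2(T)=0$ and the driver $-E[\mu_2(\cdot)|\mc F_\cdot]$ does not involve $(p_2,q_2,r_2)$, integrating from $t$ to $T$ and conditioning on $\mc F_t$ (with the tower property) gives $p_2(t)=E\!\left[\int_t^T\mu_2(s)\,ds\,\big|\,\mc F_t\right]$. A change of variable $s\mapsto s+\delta$ in the second summand of $\mu_2$ makes the interior contributions cancel telescopically, leaving
\[
p_2(t)=\int_t^{t+\delta}E\!\left[\tfrac{\partial H}{\partial\Lambda}(s)\,\Big|\,\mc F_t\right]\boldsymbol{1}_{[0,T]}(s)\,ds.
\]

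The core step is then to read off $q_2$ from this expression by an It\^o--Leibniz computation. For each fixed $s$, the process $\mc M^s_t:=E\bigl[\tfrac{\partial H}{\partial\Lambda}(s)\,\big|\,\mc F_t\bigr]$ is a martingale whose stochastic integral representation is furnished by the generalized Clark--Ocone theorem of Remark \ref{remark: Malliavin}: for $t\le s$ its Brownian integrand is $E\bigl[D_t\tfrac{\partial H}{\partial\Lambda}(s)\,\big|\,\mc F_t\bigr]$, a well-defined element of $L^2(\lambda\times P)$ by the hypothesis $E\!\int_0^T(\partial H/\partial z)^2 dt<\infty$ and property (ii) of the same remark. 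Differentiating $p_2(t)=\int_t^{t+\delta}\mc M^s_t\boldsymbol{1}_{[0,T]}(s)\,ds$ by the Leibniz rule (which produces boundary terms at $s=t$ and $s=t+\delta$) together with the martingale dynamics of $t\mapsto\mc M^s_t$ inside the integral, and using $\mc M_t^t=\tfrac{\partial H}{\partial\Lambda}(t)$, yields drift $\bigl[E[\tfrac{\partial H}{\partial\Lambda}(t+\delta)|\mc F_t]\boldsymbol{1}_{[0,T-\delta]}(t)-\tfrac{\partial H}{\partial\Lambda}(t)\bigr]dt$ and Brownian integrand $\int_t^{t+\delta}E[D_t\tfrac{\partial H}{\partial\Lambda}(s)|\mc F_t]\boldsymbol{1}_{[0,T]}(s)\,ds$. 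The drift coincides with $-E[\mu_2(t)|\mc F_t]$ (a consistency check), and matching the $dB(t)$ coefficient against \eqref{eq: noisyBSDE2} gives the stated formula for $q_2$.

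For (ii), plugging this expression into $\mu_1$ produces $\mu_1(t)-\mu(t)=q_2(t)-\int_t^{t+\delta}E[D_t\tfrac{\partial H}{\partial\Lambda}(s)|\mc F_t]\boldsymbol{1}_{[0,T]}(s)ds=0$; hence the drivers of \eqref{eq: noisyBSDE1} and \eqref{eq: noisyBSDE} coincide, and the terminal values $\varphi'(X(T))+h'(X(T))\lambda(T)$ agree because $\lambda=\tilde\lambda$, so standard uniqueness for linear BSDEs forces $(p,q,r)=(p_1,q_1,r_1)$. The main obstacle I anticipate is the rigorous It\^o--Leibniz calculation: one must differentiate an integral in which both the upper limit and the integrand are $t$-dependent martingales and swap the $ds$-integral with the stochastic integral, which requires a stochastic Fubini argument together with uniform-in-$s$ $L^2$ bounds on $E[D_t\tfrac{\partial H}{\partial\Lambda}(s)|\mc F_t]$; additional care is needed at the boundary $t>T-\delta$, where the indicator $\boldsymbol{1}_{[0,T]}$ must correctly suppress the $s=t+\delta$ endpoint term.
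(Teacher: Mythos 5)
Your proposal is correct and follows essentially the same route as the paper: solve the decoupled linear BSDE \eqref{eq: noisyBSDE2} explicitly to get $p_2(t)=\int_t^{t+\delta}E[\frac{\partial H}{\partial\Lambda}(s)\,|\,\mc{F}_t]\boldsymbol{1}_{[0,T]}(s)\,ds$, identify $q_2$ as the corresponding Brownian integrand, and then observe that substituting this $q_2$ into $\mu_1$ makes \eqref{eq: noisyBSDE1} coincide with \eqref{eq: noisyBSDE}. The only difference is that where the paper simply invokes the generic identity $q_2(t)=D_tp_2(t)$ and formally pushes $D_t$ through the integral, you derive the integrand directly from the Clark--Ocone representation of the martingales $E[\frac{\partial H}{\partial\Lambda}(s)\,|\,\mc{F}_t]$ together with a Leibniz/stochastic-Fubini argument --- which is more explicit and is in fact the technique the paper itself uses in the converse result, Theorem~\ref{eq: NoisyFBSDE_solution_converse}.
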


\begin{proof}
 The jump terms do not make a difference here, so assume for simplicity that $r=r_1=r_2=0$ everywhere.

 In general, we know that if $dp_2(t) = -\theta(t,p_2, q_2)dt + q_2(t)dB(t)$, $p_2(T)=F$, then

 \begin{equation}
 \label{eq: Malliavin_BSDE}
q_2(t) = D_t p_2(t).
 \end{equation}

 \noindent Now, note that the solution $p_2$ of the BSDE~\eqref{eq: noisyBSDE2} can be written

 \[
 \begin{array}{lll}
  p_2(t) &=& -E[\int_t^T E[\mu_2(s) | \mc{F}_s]ds | \mc{F}_t] \\[\smallskipamount]
  &=& -\int_t^T E[\mu_2(s) | \mc{F}_t] ds \\[\smallskipamount]
  &=& -\int_t^T E[\frac{\partial H}{\partial \Lambda}(t) - \frac{\partial H}{\partial \Lambda}(t + \delta) \boldsymbol{1}_{[0,T-\delta]}(t) | \mc{F}_t]ds \\[\smallskipamount]
  &=& -\int_t^{t+\delta} E[\frac{\partial H(s)}{\partial \Lambda} | \mc{F}_t] \boldsymbol{1}_{[0,T]}(s) ds
  \end{array}
 \]

 where the equalities follow from Fubini's theorem, the rule of double expectation, the definition of $\mu_2$ and a change of variables. Hence, by equation \eqref{eq: Malliavin_BSDE}:

 \[
 \begin{array}{llll}
  q_2(t) &=& D_t p_2(t) \\[\smallskipamount]
  &=& D_t[\int_t^{t+\delta} E[\frac{\partial H(s)}{\partial \Lambda} | \mc{F}_t] \boldsymbol{1}_{[0,T]}(s)] ds  \\[\smallskipamount]
  &=& \int_t^{t+\delta}  E[ D_t(\frac{\partial H(s)}{\partial \Lambda}) | \mc{F}_t] \boldsymbol{1}_{[0,T]}(s)  ds
  \end{array}
 \]
 \noindent which is part of what we wanted to prove.

 By inserting this expression for $q_2$ into the definition of $\mu_1$, we see that

 \[
  \mu_1(t) = \int_t^{t+\delta} E[D_t[\frac{\partial H(s)}{\partial \Lambda}]| \mc{F}_t] \boldsymbol{1}_{[0,T]}(s) ds + \frac{\partial H(t)}{\partial x} + \frac{\partial H(t + \delta)}{\partial y} \boldsymbol{1}_{[0,T]}(t+\delta).
 \]

 Hence, we see that the BSDE~\eqref{eq: noisyBSDE1} is the same as \eqref{eq: noisyBSDE}, so they have the same solution. This completes the proof of the theorem.

\end{proof}

We can also prove the following converse result.

\begin{theorem}
\label{eq: NoisyFBSDE_solution_converse}
If $p,q,r, \lambda$ solve the FBSDE~\eqref{eq: noisyFBSDE}-\eqref{eq: noisyBSDE} and we define $\tilde{\lambda} = \lambda$, $p_1=p$, $q_1=q$, $r_1=r$ and
\[
\begin{array}{lll}
 p_2(t) &=& \int_t^{t+\delta} E[\frac{\partial H}{\partial \Lambda}(s) | \mc{F}_t] \boldsymbol{1}_{[0,T-\delta]}(s)ds \\[\smallskipamount]
 q_2(t) &=& \int_t^{t+\delta} E[D_t[\frac{\partial H}{\partial \Lambda}(s)] | \mc{F}_t] \boldsymbol{1}_{[0,T-\delta]}(s)ds \\[\smallskipamount]
 r_2(t, \cdot) &=& 0.
\end{array}
\]

Then, $(p_i, q_i, r_i)$ for $i=1,2$ and $\tilde{\lambda}$ solve the system of equations \eqref{eq: noisyFBSDE2}-\eqref{eq: noisyBSDE2}.
\end{theorem}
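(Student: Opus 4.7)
The plan is to verify one by one that the three building blocks $\tilde{\lambda}$, $(p_1,q_1,r_1)$, and $(p_2,q_2,r_2)$ defined in the statement satisfy the three equations \eqref{eq: noisyFBSDE2}, \eqref{eq: noisyBSDE1} and \eqref{eq: noisyBSDE2}. As in the proof of Theorem~\ref{thm: noisyFBSDE_solution}, I will first reduce to the case without the Poisson integral (the jump terms are handled in exactly the same way, and with $r_2\equiv 0$ they contribute nothing new), so the core work is a Brownian calculation.

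The $\tilde{\lambda}$-equation is essentially for free. From the definition \eqref{eq: cal_H} of $\mc{H}$, the extra term $q_2(t)x$ depends only on $x$, so $\partial\mc{H}/\partial w=\partial H/\partial w$, $\partial\mc{H}/\partial z=\partial H/\partial z$, and $\nabla_k\mc{H}=\nabla_k H$. Therefore equations \eqref{eq: noisyFBSDE} and \eqref{eq: noisyFBSDE2} have identical coefficients and identical initial condition $\phi'(W(0))$, so setting $\tilde{\lambda}=\lambda$ works verbatim.

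The hard step is to check the $p_2$-BSDE. The strategy is to differentiate the defining expression $p_2(t)=\int_t^{t+\delta}E[\Phi(s)\mid\mc{F}_t]\boldsymbol{1}_{[0,T-\delta]}(s)\,ds$, where $\Phi(s):=\partial H(s)/\partial\Lambda$, by combining the Leibniz rule (for the moving endpoints in $t$) with the Clark--Ocone formula (for the martingale $s\mapsto E[\Phi(s)\mid\mc{F}_t]$ at each fixed $s$). For each fixed $s$ one has $dM_s(t)=E[D_t\Phi(s)\mid\mc{F}_t]\,dB(t)$ with $M_s(t):=E[\Phi(s)\mid\mc{F}_t]$, so that the stochastic part of $dp_2(t)$ is $\bigl(\int_t^{t+\delta}E[D_t\Phi(s)\mid\mc{F}_t]\boldsymbol{1}_{[0,T-\delta]}(s)\,ds\bigr)dB(t)$, which is exactly $q_2(t)\,dB(t)$. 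The Leibniz (boundary) contribution produces a drift of the form $\bigl(\Phi(t+\delta)\boldsymbol{1}_{[0,T-\delta]}(t+\delta)-\Phi(t)\boldsymbol{1}_{[0,T-\delta]}(t)\bigr)dt$; after taking conditional expectations this matches $-E[\mu_2(t)\mid\mc{F}_t]$ by the definition $\mu_2(t)=\partial H/\partial\Lambda(t)-\partial H/\partial\Lambda(t+\delta)\boldsymbol{1}_{[0,T-\delta]}(t)$. The terminal condition $p_2(T)=0$ is immediate because the integration range $[T,T+\delta]$ lies outside $[0,T-\delta]$, and $r_2\equiv 0$ by construction in the continuous case. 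The main obstacle here is handling the indicator carefully at the upper limit as $t$ crosses $T-2\delta$ so that the moving window is truncated consistently; this is where the Malliavin derivative extended to $L^2(P)$ (Remark~\ref{remark: Malliavin}) is needed, since $\partial H/\partial\Lambda(s)$ need not lie in $\mb{D}_{1,2}$ in the classical sense.

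Finally the $p_1$-BSDE is a matter of bookkeeping. Substituting the expression for $q_2$ obtained in the previous step into $\mu_1(t)=q_2(t)+\partial H/\partial x(t)+\partial H/\partial y(t+\delta)\boldsymbol{1}_{[0,T-\delta]}(t)$ gives precisely $\mu(t)$ as defined below \eqref{eq: noisyBSDE}. Hence the BSDE \eqref{eq: noisyBSDE1} coincides with \eqref{eq: noisyBSDE}, so the choice $p_1=p$, $q_1=q$, $r_1=r$ solves it by hypothesis; the terminal condition $p_1(T)=\varphi'(X(T))+h'(X(T))\tilde{\lambda}(T)$ matches because $\tilde{\lambda}(T)=\lambda(T)$. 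Combining the three steps yields the claim.
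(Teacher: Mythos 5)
Your proposal is correct in substance and shares the paper's overall architecture (reduce to the continuous part, observe that $\mc{H}$ and $H$ have identical $w$-, $z$- and $k$-derivatives so the $\tilde{\lambda}$-equation is immediate, and reduce the $p_1$-BSDE to the identity $\mu_1=\mu$ after the expression for $q_2$ is established), but you verify the crucial $p_2$-BSDE by a different route. You differentiate the candidate $p_2(t)=\int_t^{t+\delta}E[\partial H(s)/\partial\Lambda\mid\mc{F}_t]\boldsymbol{1}(s)\,ds$ in $t$, combining a Leibniz rule for the moving window with the Clark--Ocone representation of each martingale $t\mapsto E[\partial H(s)/\partial\Lambda\mid\mc{F}_t]$, and read off the drift from the boundary terms and the martingale part as $q_2(t)\,dB(t)$. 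The paper instead works in integrated form: it computes $\int_t^T q_2(s)\,dB(s)$ by stochastic Fubini, applies Clark--Ocone to evaluate $\int_{r-\delta}^r E[D_s(\partial H(r)/\partial\Lambda)\mid\mc{F}_s]\,dB(s)$ as $\partial H(r)/\partial\Lambda-E[\partial H(r)/\partial\Lambda\mid\mc{F}_{r-\delta}]$, and then rearranges by changes of variables to recover $\int_t^T E[\mu_2(s)\mid\mc{F}_s]\,ds-p_2(t)$. The two arguments are equivalent (differential versus integrated verification of the same BSDE), but the paper's version makes explicit the one technical point your sketch leaves implicit: the interchange of the $ds$-integral with the stochastic integral (equivalently, the validity of your Leibniz step) requires an integrability estimate, which the paper obtains from the It\^o isometry combined with Clark--Ocone, namely $E[\int_0^T E[D_s(\partial H(r)/\partial\Lambda)\mid\mc{F}_s]^2ds]=E[(\partial H(r)/\partial\Lambda)^2]-E[\partial H(r)/\partial\Lambda]^2$, integrated in $r$. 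You should supply that bound to license your term-by-term differentiation; your remaining concern about the indicators at the endpoints is real but reflects an inconsistency already present in the paper's own statement (which writes $\boldsymbol{1}_{[0,T-\delta]}(s)$ where the forward result, Theorem~\ref{thm: noisyFBSDE_solution}, uses $\boldsymbol{1}_{[0,T]}(s)$), so it is not a defect of your argument.
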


\begin{proof}
Again, the jump parts make no crucial difference, so we consider the no-jump situation for simplicity.

It is clear that equation~\eqref{eq: noisyFBSDE2} holds from the assumptions above (from the definition of $\mc{H}$, see \eqref{eq: cal_H}). Also, the BSDE~\eqref{eq: noisyBSDE1} holds: Clearly, the terminal condition holds, and by the computations in the proof of Theorem~\ref{thm: noisyFBSDE_solution}, the remaining part of equation~\eqref{eq: noisyBSDE1} also holds. Therefore, it only remains to prove that the BSDE~\eqref{eq: noisyBSDE2} holds.

By the It{\^o} isometry and the Clark-Ocone formula,

\[
 \begin{array}{lll}
 E[\int_0^T E[D_s(\frac{\partial H(r)}{\partial \Lambda}) | \mc{F}_s]^2 ds] &=& E[(\int_0^T E[D_s \frac{\partial H(r)}{\partial \Lambda} | \mc{F}_s] dB_s)^2] \\[\smallskipamount]
 &=& E[(\frac{\partial H}{\partial \Lambda}(r))^2 - E[\frac{\partial H}{\partial \Lambda}(r)]^2].
 \end{array}
\]

\noindent Hence,
\[
 \begin{array}{llll}
  \int_0^T E[\int_0^T E[D_s(\frac{\partial H(r)}{\partial \Lambda}) | \mc{F}_s]^2 ds] ^{\frac{1}{2}}dr = \int_0^T (E[\frac{\partial H}{\partial \Lambda}(r)^2] - E[\frac{\partial H}{\partial \Lambda}(r)]^2)^{\frac{1}{2}} dt < \infty.
 \end{array}
\]

\noindent Note that from the Clark-Ocone theorem,

\[
 \frac{\partial H(r)}{\partial \Lambda} = E[\frac{\partial H(r)}{\partial \Lambda} | \mc{F}_t] + \int_t^r E[ D_s(\frac{\partial H(r)}{\partial \Lambda}) | \mc{F}_s] dB(s).
\]

\noindent Therefore, by the definition of $q_2$ in the theorem and the Fubini theorem

\[
 \begin{array}{llll}
  \int_t^T q_2(s) dB(s) &=& \int_t^T \int_t^T E[D_s (\frac{\partial H(r)}{\partial \Lambda}) | \mc{F}_s] \boldsymbol{1}_{[s, s+\delta]}(r) dr dB(s) \\[\smallskipamount]

  &=& \int_t^T \int_t^T E[D_s(\frac{\partial H(r)}{\partial \Lambda}) | \mc{F}_s] \boldsymbol{1}_{[r-\delta,r]}(s) dB(s) dr.
\end{array}
\]

\noindent By some algebra and the Clark-Ocone theorem~\eqref{Clark-Ocone}, 

\[
\begin{array}{lll}
\int_t^T \int_t^T E[D_s(\frac{\partial H(r)}{\partial \Lambda}) | \mc{F}_s] \boldsymbol{1}_{[r-\delta,r]}(s) dB(s) dr  &=& \int_t^T \int_{r-\delta}^r E[D_s (\frac{\partial H(r)}{\partial \Lambda}) | \mc{F}_s] dB(s) dr \\[\smallskipamount]

  &=& \int_t^T (\frac{\partial H(r)}{\partial \Lambda} - E[\frac{\partial H(r)}{\partial \Lambda} | \mc{F}_{r-\delta}]) dr 
  
\end{array}
\]

\noindent By splitting the integrals and using change of variables (twice) as well as some algebra,

\[
\begin{array}{llll}
  &=& \int_t^T \frac{\partial H(s)}{\partial \Lambda} ds - \int_{t-\delta}^{T-\delta} E[\frac{\partial H(s + \delta)}{\partial \Lambda} | \mc{F}_s] ds \\[\smallskipamount]

  &=& \int_t^T \frac{\partial H(s)}{\partial \Lambda} ds - \int_{t}^{T} E[\frac{\partial H(s + \delta)}{\partial \Lambda} | \mc{F}_s] \boldsymbol{1}_{[0,T-\delta]}(s) ds \\[\smallskipamount]
  && - \int_t^{t+\delta} E[\frac{\partial H(s)}{\partial \Lambda} | \mc{F}_t] \boldsymbol{1}_{[0,T-\delta]}(s) ds \\[\smallskipamount]

  &=& \int_t^T E[\frac{\partial H(s)}{\partial \Lambda}  - \frac{\partial H(s+\delta)}{\partial \Lambda} \boldsymbol{1}_{[0,T-\delta]}(s) |\mc{F}_s]ds - p_2(t).

 \end{array}
\]

\noindent This proves that the BSDE~\eqref{eq: noisyBSDE2} holds as well.

\end{proof}

Now, we have expressed the solution of the Malliavin FBSDE via the solution of the ``double'' FBSDE system~\eqref{eq: noisyFBSDE2}-\eqref{eq: noisyBSDE2}. What kind of system of equations is this? The system consists of two connected BSDEs in $(p_1,q_1,r_1)$ and $(p_2, q_2, r_2)$ respectively, and these are again connected to a FBSDE in $\lambda$. However, from equation \eqref{eq: noisyBSDE2} and the definition of $\mu_2$, we see that the right hand side of \eqref{eq: noisyBSDE2} does not depend on $p_2$. Hence, the BSDE \eqref{eq: noisyBSDE2} can be rewritten

\[
 \begin{array}{lll}
  dp_2(t) &=& h(t, \lambda, p_1, q_1, r_1(\cdot))dt + q_2(t)dB(t) + \int_{\mb{R}} r_2(t, \zeta) \tilde{N}(dt, d\zeta) \\[\smallskipamount]
  p_2(T) &=& 0.
 \end{array}
\]

This can be solved to express $p_2$ using $\lambda, p_1, q_1$ and $r_1(\cdot)$ by letting $q_2(t) = r_2(t,\cdot) = 0$ for all $t$ and

\[
 p_2(t) = E[\int_t^T h(t, \lambda, p_1, q_1, r_1(\cdot)) dt | \mc{F}_t].
\]

Now, we can substitute this solution for $p_2(t)$ into the FBSDE system \eqref{eq: noisyFBSDE2}-\eqref{eq: noisyBSDE1}. The resulting set of equations is a regular system of time advanced FBSDEs with jumps. There are to the best of our knowledge, no general results on existence and uniqueness of such systems of FBSDEs. However, if we simplify by removing the jumps and there was no time-advanced part (i.e., no delay process $Y_i$ in the original FSDE~\eqref{eq: FSDE}), there are some results by Ma et al.~\cite{Ma}.

\section{Optimal consumption rate with respect to recursive utility}
\label{sec: application}

In this section, we apply the previous results to the problem of determining an optimal consumption rate with respect to recursive utility (see also {\O}ksendal and Sulem~\cite{OSartikkel2} and Dahl and {\O}ksendal~\cite{DahlOksendal}). Let $X(t) = X^c(t)$, where the consumption rate $c(t)$ is our control, and assume that

\begin{equation}
\label{eq: SDE_example}
 \begin{array}{lll}
  dX(t) &=& X(t)[\mu(t) dt + \sigma(t) dB(t) + \int_{\mb{R}} \gamma(t,\zeta) \tilde{N}(dt, d\zeta)] \\[\smallskipamount]
  &&- [c_1(t) + c_2(t)]X(t)dt, \\[\smallskipamount]
  X(0) &=& x > 0
 \end{array}
\end{equation}

\noindent and $W_i(t)$ is given by

\[
 \begin{array}{lll}
  dW_i(t) &=& -[\alpha_i(t)W_i(t)  + \eta_i(t) \ln(Y_i(t)) + \kappa_i(t) \ln(\Lambda_i(t))+ \ln(c_i(t)X(t))] \\[\smallskipamount]
  &&+ Z_i(t)dB(t) + \int_{\mb{R}} K_i(t, \zeta) \tilde{N}(dt, d\zeta) \\[\smallskipamount]
  W_i(T) &=& 0.
 \end{array}
\]

Let the performance functional be defined by $J_i(c_1, c_2) := W_{i}(0)$, i.e., $J_i$ is the recursive utility for player $i$. Also, assume that both players have full information, so $(\mc{E}_t^{(i)})_t = (\mc{F}_t)_t$ for $i=1,2$.

We would like to find a Nash equilibrium for this FBSDE game with delay. To do so we will use the maximum principle Theorem~\ref{thm: Suff-max-princ-FBSDE}. Note that $f_i = \varphi_i=h_i = 0$  and that $\psi_i(w)=w$ for $i=1,2$. The Hamiltonians are:

\[
\begin{array}{lll}
H_i(t, x, y_1, y_2, \Lambda_1, \Lambda_2, w_i, z_i, k_i, c_1, c_2, \lambda_i, p_i, q_i, r_i(\zeta)) \\[\smallskipamount]
\hspace{0.5cm} = \lambda_i (\alpha_i(t)w_i + \eta_i(t)\ln(y_i) + \ln(c_i x))  \\[\smallskipamount]
\hspace{0.5cm} + p_i(x \mu(t) - (c_1 + c_2)x)+ q_i \sigma(t) x + \int_{\mb{R}} x r_i(\zeta) \gamma(t,\zeta) \nu(d\zeta) \mbox{ for } i=1,2.
\end{array}
\]

The adjoint BSDEs are

\[
\begin{array}{lll}
dp_i(t) &=& E[\mu_i(t) | \mc{F}_t]dt + q_i(t) dB(t) + \int_{\mb{R}} r_i(t,\zeta) \tilde{N}(dt,d\zeta), \\[\smallskipamount]
p_i(T) &=& 0
\end{array}
\]
\noindent where
\[
\begin{array}{lll}
\mu_i(t) &=& -\frac{\lambda_i(t)}{X(t)} - \frac{\lambda_i(t + \delta_i) \eta_i(t+\delta_i)}{Y_i(t+\delta_i)} \boldsymbol{1}_{[0,T-\delta_i]}(t) - p_i(t)(\mu(t) - (c_1(t) + c_2(t)))  \\[\smallskipamount]
&&+ q_i(t)\sigma(t)+ \int_{\mb{R}} r_i(t,\zeta) \gamma(t,\zeta) \nu(d \zeta)
\end{array}
\]
\noindent for $i=1,2$. Note that by the definition of $Y_i$, $Y_i(t+\delta_i) = X(\{t+\delta_i\} - \delta_i) = X(t)$.

The adjoint BSDEs are linear, and the solutions are given by (see {\O}ksendal and Sulem~\cite{OksendalSulem_riskmin})

\begin{equation}
\label{eq: gamma_p}
\begin{array}{lll}
\Gamma_i(t)p_i(t) &=& E[\int_t^T ( \frac{\lambda_i(s)}{X(s)} +  \frac{\lambda_i(s + \delta_i) \eta_i(s+\delta_i)}{Y_i(s+\delta_i)}\boldsymbol{1}_{[0,T-\delta_i]}(s))\Gamma_i(s) ds | \mc{F}_t] \\[\smallskipamount]
&=& E[\int_t^T ( \frac{\lambda_i(s)}{X(s)} +  \frac{\lambda_i(s + \delta_i) \eta_i(s+\delta_i)}{X(s)}\boldsymbol{1}_{[0,T-\delta_i]}(s) ) \Gamma_i(s) ds | \mc{F}_t]
\end{array}
\end{equation}

\noindent where

\[
\begin{array}{lll}
 d \Gamma_i(t) &=& \Gamma_i(t)[(\mu(t) - (c_1(t) + c_2(t)))dt + \sigma(t)dB(t) + \int_{\mb{R}} \gamma(t, \zeta) \tilde{N}(dt, d\zeta)] \\[\smallskipamount]
 \Gamma_i(0) &=& 1 \mbox{ for } i=1,2.
\end{array}
\]

Note that by the SDE~\eqref{eq: SDE_example},

\begin{equation}
\label{eq: sammenheng_gamma_X}
x \Gamma_i(t) = X(t).
\end{equation}

Hence, by combining equations~\eqref{eq: gamma_p} and \eqref{eq: sammenheng_gamma_X}, we see that

\begin{equation}
 \label{eq: X_p}
\begin{array}{lll}
X(t)p_i(t) &=& E[\int_t^T ( \lambda_i(s) +  \lambda_i(s + \delta_i) \eta_i(s+\delta_i)\boldsymbol{1}_{[0,T-\delta_i]}(s) ) ds | \mc{F}_t].
\end{array}
\end{equation}

The adjoint FSDEs are

\[
\begin{array}{lll}
d\lambda_i(t) &=& \lambda_i(t) \alpha_i(t) dt \\[\smallskipamount]
\lambda_i(0) &=& 1, \mbox{ for } i=1,2.
\end{array}
\]

These are (non-stochastic) differential equation with solution $\lambda_i(t) = \exp(\int_0^t \alpha_i(s) ds)$ for $i=1,2$.

We maximize $H_i$ with respect to $c_i$. For $i=1,2$, the first order condition is:

\[
\hat{c}_i(t) = \frac{\lambda_i(t)}{p_i(t) X(t)}.
\]

By substituting equation~\eqref{eq: X_p} into this, we find (by the sufficient maximum principle, Theorem~\ref{thm: Suff-max-princ-FBSDE}) that the consumption rates leading to a Nash equilibrium for the recursive utility problem are given by:

\[
 c^*_i(t) = \frac{\lambda_i(t)}{E[\int_t^T ( \lambda_i(s) +  \lambda_i(s + \delta_i) \eta_i(s+\delta_i) \boldsymbol{1}_{[0,T-\delta_i]}(t) ) ds | \mc{F}_t]}.
\]
\noindent where $\lambda_i(t) = \exp(\int_0^t \alpha_i(s) ds)$ for $i=1,2$.

\section{Conclusion}

In this paper, we have analyzed a two-player stochastic game connected to a set of FBSDEs involving delay and noisy memory of the market process. We have derived sufficient and necessary maximum principles for a set of controls for the two players to be a Nash equilibrium in this game. We have also studied the associated FBSDE involving Malliavin derivatives, and connected this to a system of FBSDEs not involving Malliavin derivatives. Finally, we were able to derive a closed form Nash equilibrium solution to a game where the aim is to find the optimal consumption with respect to recursive utility.

\end{document}